\documentclass[11pt]{amsart}
\usepackage{amsthm} 
\usepackage{color}
\usepackage{graphicx, epsfig}
\usepackage{epstopdf}
\usepackage{amsmath, amssymb, latexsym, euscript, amscd}
\usepackage{url}
\usepackage[all]{xy}
\usepackage{psfrag}
\usepackage{mathrsfs}
\usepackage{caption, wrapfig, multirow, tabularx, mathrsfs,verbatim}
\usepackage{subcaption}
\usepackage{mathptmx}
\usepackage[]{overpic}
\usepackage[dvipsnames]{xcolor}
\usepackage{thmtools}
\usepackage[colorlinks=true, linkcolor=blue, citecolor = blue, urlcolor=blue]{hyperref}
\usepackage{thmtools}
\usepackage{thm-restate}
\usepackage[]{cleveref}
\usepackage{tikz-cd} 
\usepackage[margin=1.50in]{geometry}
\usepackage{comment}
\usepackage{xcolor}

\newtheorem{theorem}{Theorem}[section]
\newtheorem{lemma}[theorem]{Lemma}
 
\newtheorem{definition}[theorem]{Definition} 
\newtheorem{proposition}[theorem]{Proposition}

\theoremstyle{remark}

\def\Isom{\operatorname{Isom}}
\def\gap{\vspace{.3cm}\noindent}

\def\nb{$\bullet\ \ \ $}
\def\smallskip{\vspace{.15cm}}
\def\medskip{\vspace{.3cm}}

\def\Pcal{\mathcal P}

\def\Wcal{\mathcal W}

\def\Vcal{\mathcal V}

\def\Rcal{\mathcal R}

\def\SS{\mathbb S}
\def\bdy{\partial}
\def\RR{\mathbb R}
\def\ZZ{\mathbb Z}
\def\ZZ{\mathbb Z}
\def\QQ{\mathbb Q}

\def\NN{\mathbb N}

\def\EuG{\EuScript{G}}

\def\Aut{\operatorname{Aut}}

\def\interior{\operatorname{int}}

\def\bdy{\partial}

\def\SL{\operatorname{SL}}

\def\GL{\operatorname{GL}}

\def\SOL{\operatorname{Sol}}

\def\Isom{\operatorname{Isom}}
 
\def\wtimes{\widetilde{\times}} 

\def\trace{\textnormal{tr}}

\def\hol{\operatorname{hol}}
\def\tr{\operatorname{tr}}
\def\MCG{\textnormal{Mod}}
\def\Id{\operatorname{Id}}

\def\mon{\operatorname{mon}}
\def\torus{T}
\def\pgamma{p_{_\Gamma}}
\def\qwcal{q_{_\Wcal}}

\newcommand{\bv}{\left[\begin{array}{c}}
\newcommand{\ev}{\end{array}\right]}
\newcommand{\bbmat}{\begin{bmatrix}} 
\newcommand{\ebmat}{\end{bmatrix}}
\newcommand{\bmat}{\begin{matrix}} 
\newcommand{\emat}{\end{matrix}}
\newcommand{\bpmat}{\begin{pmatrix}} 
\newcommand{\epmat}{\end{pmatrix}}

\newcommand{\beq}[1]{\begin{equation}\label{#1}}
\def\eeq{\end{equation}}

\title{A Combinatorial Characterization of SOL 3-Manifolds}

\author{Daryl Cooper}
\address{Mougins, France}
\email{cooper@math.ucsb.edu}

\author{Leslie Mavrakis}
\address{Department of Mathematics\\University of Utah\\Salt Lake City, UT 84112}
\email{l.mavrakis@utah.edu}

\author{Priyam Patel}
\address{Department of Mathematics\\University of Utah\\Salt Lake City, UT 84112}
\email{patelp@math.utah.edu}

\date{}

\begin{document}
\maketitle
\begin{abstract} We show that there is a universal compact branched 3-manifold $\Wcal_{\SOL}$ such that a closed 3-manifold $M$ immerses into $\Wcal_{\SOL}$ if and only if $M$ admits a $\SOL$ structure. Equivalently, a closed 3-manifold is $\SOL$ if and only if it has a certain type of triangulation. The construction of $\Wcal_{\SOL}$ is based on a regular language that characterizes Sol manifolds.
\end{abstract}

\section{Introduction}

This is the second in a series of papers giving a combinatorial characterization
of those closed 3-manifolds that admit a particular Thurston geometry.
In \cite{CPT}, a family of compact $n$-manifolds is defined to be \emph{locally combinatorially defined (LCD)} if it consists of those  $n$-manifolds that admit a triangulation where every vertex has a neighborhood simplicially isomorphic to one of finitely many triangulated balls. In other words, a family of manifolds is LCD if it is characterized by a local property of the triangulations. A family is \emph{BM} if it consists of the compact $n$-manifolds that properly immerse into a fixed compact piecewise linear (PL) branched $n$-manifold $\Wcal$. In this sense, $\Wcal$ may be considered as a kind of universal object or a classifying space for the family.  The main result of \cite{CPT} connects these combinatorial and topological viewpoints and is the following.

\begin{theorem} \label{equiv thm}
A family of compact PL $n$-manifolds is LCD if and only if it is BM.
\end{theorem}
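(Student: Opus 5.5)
Both directions go through the same intermediate object: the finite set of \emph{vertex-star types}, meaning isomorphism classes of simplicial neighborhoods of vertices. Throughout I work with triangulations and properly immersed triangulated manifolds, passing to barycentric or second-derived subdivisions whenever needed. Then "LCD" means the allowed star types form a finite list. "BM" means there is a finite branched complex $\Wcal$ through which every allowed manifold factors by a simplicial local homeomorphism onto its image branches.

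\textbf{BM $\Rightarrow$ LCD.} Suppose the family consists of the compact manifolds properly immersing into a fixed compact PL branched $n$-manifold $\Wcal$. First triangulate $\Wcal$ so that the branch locus is a subcomplex. Any proper immersion $f:M\to\Wcal$ can be isotoped, by PL general position and simplicial approximation, to be simplicial after subdividing $M$. Pulling back the triangulation of $\Wcal$ (after one derived subdivision, to make the pullback a genuine triangulation) gives a triangulation of $M$. In it, the star of each vertex $v$ maps isomorphically onto a \emph{sheet}, that is, a sub-ball of the star of $f(v)$ in $\Wcal$. Since $\Wcal$ is compact, it has finitely many vertex stars, and each contains finitely many such sheets. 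This gives the required finite list of balls.

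Conversely, suppose $M$ has a triangulation all of whose vertex stars lie in the list obtained from $\Wcal$. I would use a coloring/labeling trick to build a map to $\Wcal$. Take the list to consist of sheets labeled by the simplices of $\Wcal$ they cover, so that adjacent stars in $M$ agree on overlaps. This requires recording in the list the labeled stars, which is still finite. Mapping each simplex to its label then gives a simplicial immersion.

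\textbf{LCD $\Rightarrow$ BM.} Fix the finite list $B_1,\dots,B_k$ of triangulated balls. Pass to the second barycentric subdivision, so that simplices are determined by flags and each simplex of $M$ lies in the star of a unique vertex-type configuration. Build $\Wcal$ by taking the disjoint union of the subdivided balls $B_i$, or better of all simplices together with their possible "local contexts", and gluing along every pair of codimension-one faces whose neighborhoods are compatible. Concretely, $\Wcal$ is the quotient of $\bigsqcup$ (simplices decorated by the isomorphism type of the star of each of their vertices) in which a face is identified with every face carrying the same decoration. This is a finite complex, because there are finitely many decorations. For $M$ in the family, send each simplex to its decorated class; this map is simplicial. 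It is an immersion because, near any point of $M$, the decoration records the full star, so the map is injective on stars. The case of a point in the interior of a simplex is clear; for a point on a face, the star lies in one of the $B_i$, which maps injectively. Conversely, if $M$ immerses into $\Wcal$, pulling back the decorations shows every vertex star of $M$ (in the pulled-back subdivision) is isomorphic to some $B_i$. This step must be arranged so that the list is closed under the subdivision used.

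\textbf{Main obstacle.} I expect the hard step to be showing that $\Wcal$ is a \emph{branched manifold*} and that immersing into it forces the local structure back. Gluing faces with equal decoration could create identifications that make stars in $\Wcal$ non-manifold in ways not seen by a branched chart. It could also create spurious manifolds that immerse without having allowed stars. To handle this, I would decorate with enough data, namely the star of the star (2-neighborhoods), so that a local injectivity argument works. Two simplices with the same decoration then have canonically isomorphic neighborhoods, and the pullback of a decoration determines the vertex star uniquely. I would then verify the branched-manifold condition by checking that every point of $\Wcal$ has a neighborhood that is a finite union of $n$-balls, each an image of some $B_i$, which meet along subcomplexes. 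This is exactly what the definition of a branched manifold in \cite{CPT} requires.
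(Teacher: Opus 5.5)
The paper does not prove this theorem; it is quoted from \cite{CPT}. Judged on its own, your sketch has a genuine gap in each direction.

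In BM $\Rightarrow$ LCD, what you actually establish is only that immersed manifolds have vertex stars in a finite list. (This is better done by pulling back a \emph{fixed} triangulation of $\Wcal$ along the immersion; simplicial approximation can destroy local injectivity.) The converse inclusion is the real content, and ``recording labeled stars in the list'' is not permitted. An LCD list consists of unlabeled triangulated balls up to simplicial isomorphism, and nothing forces the isomorphisms $\operatorname{st}(v)\cong B_i$ at adjacent vertices to agree on overlaps. The unlabeled list genuinely defines the wrong family. Take $\Wcal$ to be a torus with a $6$-regular triangulation. The BM family is $\{T^2\}$, since a closed surface immersing in $T^2$ covers it, and every pulled-back star is a hexagon. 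Yet the Klein bottle also admits $6$-regular triangulations. What is missing is a rigidification step. You need a triangulation of $\Wcal$ whose labels can be recovered from unlabeled local combinatorics, e.g.\ chiral, asymmetric subdivisions that name each simplex. You then need a proof that any triangulation with those stars admits a globally consistent simplicial immersion.

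In LCD $\Rightarrow$ BM, your decorations (isomorphism types of stars, or of stars of stars) are canonical, and no canonical decoration can give a locally injective map. Consider the hexagonal torus $\RR^2/n\Lambda$, with $\Lambda$ the Eisenstein lattice and $n\ge 3$. Rotation by $\pi/3$ about a vertex $v$ is a simplicial automorphism that permutes the six triangles of $\operatorname{st}(v)$. So these triangles receive identical decorations and all land on the same triangle of $\Wcal$. You need non-canonical choices: fix isomorphisms $\phi_v:\operatorname{st}(v)\to B_{i(v)}$ and decorate each simplex $\sigma$ by its positions $\phi_v(\sigma)$ at each of its vertices, with vertex correspondences.

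Your closing criterion is also not the definition of a branched manifold. A PL branched manifold requires charts in which sheets are tangent. For example, take three half-balls glued along a common boundary disk, with every pair declared a sheet. This is a finite union of $n$-balls meeting in subcomplexes, but it admits no chart injective on all three sheets. You must actually construct the atlas. One route is to show that the simplices glued along each codimension-one face split consistently into two sides, and then extend this over the lower-dimensional strata. The sketch leaves this undone.
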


A \emph{Klein geometry} is a pair $(G,X)$ where $G$ is a Lie group that acts transitively on a manifold $X$ by real analytic maps. A manifold has a \emph{$(G,X)$-structure} if it admits an atlas whose  charts have codomain $X$ and whose transition maps are restrictions of elements of $G$.   Thurston introduced eight such geometries for 3-manifolds, one of which is called $\SOL$. A compact 3-manifold admits a $\SOL$ structure if and only if it is a torus bundle over the circle with Anosov monodromy or a quotient of such a manifold by a free involution. The main theorem of this paper is:

\begin{restatable}{theorem}{MainResult}\label{solbrmfdthm}
(a) The family of closed 3-manifolds that admit a $\SOL$ structure is LCD.\\
(b) The family of $\SOL$ torus bundles is LCD.
\end{restatable}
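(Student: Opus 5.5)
By Theorem~\ref{equiv thm}, it suffices to show that each family is BM. For (b) I will build a compact branched 3-manifold $\Wcal$ into which exactly the $\SOL$ torus bundles immerse. For (a) I will then enlarge $\Wcal$ to $\Wcal_{\SOL}$ to account for the quotients by free involutions.

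\textbf{Encoding the monodromy.} A torus bundle $M_A$ with monodromy $A\in\SL(2,\ZZ)$ (or $\GL(2,\ZZ)$) is $\SOL$ exactly when $A$ is Anosov, i.e.\ $|\tr A|>2$. Up to conjugacy and sign, every such $A$ factors as a positive word in
\[
L=\bpmat 1&0\\1&1\epmat,\qquad R=\bpmat 1&1\\0&1\epmat
\]
that contains both letters. This is the continued fraction and train-track expansion. The cyclic words involving both $L$ and $R$, possibly together with the finitely many sign and orientation-reversing factors such as $-\Id$ and the flip, form a regular language recognized by a finite automaton $\Gamma$.

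\textbf{Realizing words geometrically.} Each letter is realized by a fixed triangulated (or cellulated) block $T^2\times[0,1]$ whose two boundary tori carry standard triangulations. The triangulations differ by a diagonal flip, so that gluing the top of one block to the bottom of the next realizes multiplication by the letter; this is a layered-triangulation picture. I then define $\Wcal$ as a branched manifold fibering over the graph $\Gamma$, with one such block for each edge of $\Gamma$ and with blocks identified along their boundary tori at each vertex. A closed loop in $\Gamma$ spelling the word $w$ lifts to an immersion of $M_w$ into $\Wcal$.

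The converse is the main obstacle. Given a proper immersion $M\to\Wcal$, I must show that $M$ is a torus bundle whose monodromy is a word accepted by $\Gamma$. Composing with the projection $\Wcal\to\Gamma$, which is a fibration away from the branching, gives a map $M\to\Gamma$. The hard part is arranging the local branching so that the preimage of a vertex neighborhood is a union of product tori and a local homeomorphism forces $M$ to be a torus bundle over a circle mapping to an accepted loop. I would design the branching so that it occurs only across vertex tori, where the immersion must continue with some block from the next edge. Covering-space arguments then show that $M$ is a union of blocks glued cyclically along tori via the standard identification. The condition that both $L$ and $R$ appear, or that the trace exceeds~$2$, is what requires $\Gamma$ to carry a finite memory: it tracks whether both letters have appeared, with a state after which repetition is allowed. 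The genuinely delicate point is preventing loops that use only $L$ or only $R$, which give parabolic monodromy (Nil or Euclidean manifolds). I would handle this by requiring every cycle in $\Gamma$ to contain both an $L$-edge and an $R$-edge, for instance by making $\Gamma$ the graph with states "last letter $L$" and "last letter $R$", where every cycle must switch. Cycles that stay in one state forever are then impossible, because I omit the self-loops and realize $L^k$ by separate bounded-length chains. This needs care since $k$ is unbounded, so instead I would allow self-loops but force at least one switch via the branching structure. That last point is where I expect the most work.

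\textbf{Part (a).} A closed $\SOL$ manifold that is not a torus bundle is a quotient of some $M_A$ by a free involution; equivalently, it is a union of two twisted $I$-bundles over the Klein bottle glued along a torus. I would add to $\Wcal$ finitely many "terminal" blocks: a triangulated twisted $I$-bundle over the Klein bottle, attached at the vertex tori. Such manifolds correspond to paths in $\Gamma$ that begin and end at terminal blocks. The same immersion analysis applies, with the Anosov condition now read off the gluing word of the two halves. Taking $\Wcal_{\SOL}$ to be this enlarged branched manifold and invoking Theorem~\ref{equiv thm} completes the proof.
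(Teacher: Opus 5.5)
Your proposal has a genuine gap, and it is the step you flag as ``where I expect the most work.'' In the framework you set up (one block per letter, blocks glued by homeomorphisms), it cannot be carried out. Immersions of closed manifolds correspond to \emph{all} closed immersed loops in the track. There is no start or accept state, so automaton memory does nothing: a closed loop can stay forever in the state ``both letters seen.''

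Concretely, to realize $L^kR$ for every $k$, take $k$ larger than the number of $L$-edges. A loop spelling $L^kR$ must then cross some $L$-edge twice in the same direction during its $L^k$ segment. The piece between those two crossings is itself an immersed closed loop spelling $L^j$ with $j\ge 1$. No branching structure can forbid it, because it only uses switches the original loop already used. By your own lifting step (\Cref{thm: immersions from paths}(4) in the paper), the Nil manifold $M_{L^j}$ then immerses in $\Wcal$. So your $\Wcal$ is not universal for $\SOL$ bundles.

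The missing idea is to use gluings that are \emph{not} homeomorphisms, so that the determinant acts as a counter, a non-regular (context-free) constraint.
\begin{itemize}
\item In $\EuG$ the repeatable letter sits on a loop labeled $2L$, glued by a degree-$4$ covering. A compensating loop labeled $2I$ is placed so that, by orientability of the track, the two loops contribute opposite powers of $4$, and a loop using both must pass through $y$ and the edges $S,L$.
\item By \Cref{thm: immersions from paths}(1), a closed manifold immersing with loop $\omega$ has integral monodromy that is $\GL(2,\QQ)$-conjugate to $A(\omega)$. Hence $\det A(\omega)=\pm1$.
\item It follows that the two loops are traversed equally often, which forces $\omega$ through $y$.
\item So $\pm A(\omega)^{-1}$ is a word beginning with $L$ and ending with $LS$. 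It lies in $\pm\Pcal_a$ and is Anosov by \Cref{matrix lemma}.
\end{itemize}

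Part (a) of your proposal inherits this gap, and it also skips two things.
\begin{itemize}
\item Closed manifolds can immerse with paths that bounce off the Klein-bottle ends in their interior. Examples are torus bundle double covers of semi-bundles, and loops alternating several times between the two ends. The paper handles these by lifting to $\Wcal_{\overline\Gamma}$ and using $\tau(\pm\Pcal)=\pm\Pcal$ for $\tau(A)=PA^{-1}P$.
\item The $\SOL$ criterion for $N_A$ is that every entry of $A$ is non-zero (\Cref{solsemibundle}), not an Anosov condition on a gluing word.
\end{itemize}
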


The method of proof for \Cref{solbrmfdthm} is to define a compact branched 3-manifold $\Wcal_{\SOL}$ that is a generalized torus bundle over a train track, as was done for the family of all 3-manifolds that are torus bundles over the circle in
\cite[Section 7]{CPT}. It is then shown that a compact manifold $M$ properly immerses into $\Wcal_{\SOL}$ if and only if $M$ admits a $\SOL$ structure. 

The key to this approach is to show there is a subset of the mapping class group of the torus defined by a \emph{regular language} that gives
all $\SOL$ manifolds. 
This language is then encoded using matrix labels on the edges of a train track, and this describes $\Wcal_{\SOL}$. A similar regular language and associated branched manifold is used to prove $(b)$. 

There is a technical problem in doing so, which involves unwanted immersions of some manifolds into $\Wcal_{\SOL}$, and this is resolved using a covering space trick.
The existence of the two regular languages follows from an algebraic characterization, \Cref{matrix lemma}, of conjugacy classes of Anosov elements of $\GL(2,\ZZ)$ and a similar result for $\SOL$ semi-bundles.

In addition, $\Wcal_{\SOL}$ has an affine structure that pulls back to any $3$-manifold immersed in it, and the immersions  are $\pi_1$-injective. This is not the case for some other Thurston geometries.

\Cref{sec: torus bundles} reviews the classification of torus bundles. 
\Cref{sec: train tracks to brnmfld} describes a construction of a branched 3-manifold determined by a train track with edges labeled by integer matrices with non-zero determinant. This is a kind of generalized torus bundle over the train track, but with singular fibers where two copies of a torus times an interval are glued together by a finite degree covering map on their boundaries.
The main result of that section, \Cref{thm: immersions from paths},  classifies immersions into such branched 3-manifolds. \Cref{SOLsec} reviews $\SOL$ geometry  and the classification of $\SOL$ semi-bundles, which is less well-known than the classification of $\SOL$ bundles. \Cref{Regsec} describes the regular languages used for $\SOL$ manifolds, and \Cref{MainThmsec} proves the main theorem.

This work is based on the PhD thesis of the second author \cite{MR4957404}, supervised by the third author with input from the first author. 

\gap 

\noindent\textbf{Acknowledgments:} The third author was partially supported by National Science Foundation CAREER Grant DMS–2046889. This work was also supported by the National Science Foundation Grant DMS-2424139, while the third author was in residence at and the second author was visiting the Simons Laufer Mathematical Sciences Institute. The first author thanks France for being so wonderful. 

\section{Torus Bundles}\label{sec: torus bundles}

The torus $\RR^2/\ZZ^2$ is denoted by $\torus$. A torus bundle over the circle is a closed 3-manifold $M$ together with a submersion $\rho:M\rightarrow \SS^1$ that has torus fibers. Let $\widetilde{M}=\torus\times\RR$. Given a homeomorphism  $\phi:\torus\to \torus$, define an automorphism of $\widetilde{M}$ by
 $\Phi(x,t)= (\phi(x),t+1)$. 
Let $M_\phi =\widetilde{M}/\langle\Phi\rangle$. Then $\widetilde{M}
$ is a covering space of $M_{\phi}$ and $\Phi$ is a covering transformation. Now, $\torus\times[0,1]$ is a fundamental domain for the action of $\Phi$ and
\beq{eqx}M_{\phi}=\left(\ \torus\times[0,1] \ \right)/(x,0)\sim(\phi(x),1).\eeq

The map $\rho_\phi:M_\phi\to \SS^1$ given by $\rho_\phi([x,t])=\exp(2\pi i t)$ makes $M_{\phi}$ into  a torus bundle over $\SS^1$ with \emph{monodromy} $\phi$.
Every torus bundle over $\SS^1$ arises in this way.
Using the identification $H_1(\torus)\equiv H_1(\torus\times 0)\equiv H_1(\widetilde{M})$ then $\Phi_*=\phi_*$. With the identifications used in \Cref{eqx} the monodromy of the bundle (not its inverse) gives the action on $H_1(\widetilde M)$.

Let $\MCG(\torus)$ denote the mapping class group of $\torus$. 
  Given a group $G$, let $[G]$ denote the set of conjugacy classes of elements of $G$. The bundle map $\rho_{\phi}$ does not determine coordinates on the fibers,
 so only the conjugacy class $[\phi]\in[\MCG(\torus)]$ is a well-defined invariant of $\rho_{\phi}$.

 Two torus bundles $\rho_{\phi}:M_{\phi}\rightarrow \SS^1$ and $\rho_{\psi}:M_{\psi}\rightarrow \SS^1$
are \emph{virtually equivalent} if there is a submersion $h:M_{\psi}\rightarrow M_{\phi}$ such that $\rho_{\phi}\circ h=\rho_{\psi}$.
Then $h$ is a covering space that only 
unwraps torus fibers but not the circle base space. The cover
 of $M_{\phi}$ given by $h$ is  uniquely determined by a lattice
 $\Gamma\subset H_1(\torus;\ZZ)$ such that $\phi_*(\Gamma)=\Gamma$. Virtual equivalence is an equivalence relation by \Cref{virtequiv} below. 

If $h$ is also injective, the bundles are \emph{equivalent}.
By \Cref{virtequiv} this happens if and only if $\phi$ and $\psi$ are isotopic to conjugate homeomorphisms. Note that a single $3$-manifold may fiber over $\SS^1$ in more than one
way. For example, reversing the projection to $\SS^1$ changes $\phi$ to
$\phi^{-1}$.

Using the standard basis of $\RR^2$, an invertible $2\times2$  matrix $A$ determines
a linear automorphism of $\RR^2$. In what follows, the image in $\torus=\RR^2/\ZZ^2$ of the point $z=(x,y)\in \RR^2$ is denoted by  $[z]$.
If $A$ has integer entries, then it covers a map $\tau_A:\torus\rightarrow \torus$ given by 
 $\tau_A[z]=[Az]$ because $A(\ZZ^2)\subset\ZZ^2$. This is a local homeomorphism of degree $\det A$, and thus, the degree of $\tau_A$ as a covering map is $|\det A|$. 
 
The standard basis for $\RR^2$ determines a pair of oriented loops $\alpha, \beta$ in $\torus$ and $([\alpha], [\beta])$ is \emph{the standard basis} of $H_1(\torus; \ZZ)$. The induced map $(\tau_A)_*$ on $H_1(\torus; \ZZ)$ is represented by the matrix $A$ in this basis. In particular, when the monodromy $\phi=\tau_A$, the matrix of $\phi_*$ is $A$ using the basis above.
 Below, $M_A$ is the torus bundle over $\SS^1$ with monodromy $\tau_A$.

\begin{proposition}\label{virtequiv}
    If $A, B \in \GL(2, \ZZ)$, then the torus bundles $M_A$ and $M_B$ are\\
    \nb equivalent $\hspace{0.55 in}\qquad\Leftrightarrow \qquad [A]=[B]\in [\GL(2,\ZZ)].$\\
    \nb virtually equivalent $\qquad\Leftrightarrow\qquad[A]=[B]\in[\GL(2,\QQ)]$.
\end{proposition}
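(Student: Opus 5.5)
We prove the two equivalences separately. In both we use the correspondence, already noted before the statement, between fiber-preserving covers of $M_\phi$ and $\phi_*$-invariant lattices $\Gamma\subset H_1(\torus;\ZZ)$.

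\emph{Equivalence.} Suppose $B=PAP^{-1}$ with $P\in\GL(2,\ZZ)$. Then the map $(x,t)\mapsto(\tau_P(x),t)$ on $\torus\times\RR$ conjugates $\Phi_A$ to $\Phi_B$. It therefore descends to a homeomorphism $M_A\to M_B$ commuting with the projections to $\SS^1$.

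Conversely, suppose $h:M_B\to M_A$ is an injective submersion with $\rho_A\circ h=\rho_B$. Then $h$ is a homeomorphism, and it lifts to $\tilde h:\torus\times\RR\to\torus\times\RR$ covering the identity on $\RR$. We may choose the lift with $\tilde h\circ\Phi_B=\Phi_A\circ\tilde h$; since $h$ preserves $\rho$, the generator is sent to the generator. Let $P$ be the matrix of $\tilde h_*$ on $H_1(\torus\times\RR)=\ZZ^2$. Because $\tilde h$ is a homeomorphism, $P\in\GL(2,\ZZ)$. Taking $H_1$ of the intertwining relation gives $PB=AP$, so $[A]=[B]$.

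\emph{Virtual equivalence, forward direction.} Given a virtual equivalence $h:M_B\to M_A$, lift it in the same way to $\tilde h$ with $\tilde h\Phi_B=\Phi_A\tilde h$. Now $\tilde h$ is a finite cover $\torus\times\RR\to\torus\times\RR$ that is the identity on the $\RR$ factor. Hence $P=\tilde h_*$ is an integer matrix with $\det P\neq0$, and $PB=AP$ gives $B=P^{-1}AP$ with $P\in\GL(2,\QQ)$.

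The one point needing care is finiteness of the fiber cover. This holds because $M_B$ is compact, so $h$ is a finite covering, and its restriction to a fiber torus is then a finite cover of a torus.

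\emph{Virtual equivalence, converse.} Suppose $B=P^{-1}AP$ with $P\in\GL(2,\QQ)$. Multiply $P$ by a suitable integer to clear denominators; this does not change the conjugation, so we may assume $P$ is an integer matrix with $\det P\neq 0$. Then $AP=PB$.

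The map $\tau_P\times\Id$ on $\torus\times\RR$ satisfies $(\tau_P\times\Id)\circ\Phi_B=\Phi_A\circ(\tau_P\times\Id)$. Indeed, $\tau_P\tau_B=\tau_{PB}=\tau_{AP}=\tau_A\tau_P$. So it descends to a map $M_B\to M_A$ that is a local diffeomorphism (hence a submersion) and commutes with the projections to $\SS^1$. This is the required virtual equivalence.

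Finally, virtual equivalence is an equivalence relation because it has been characterized by conjugacy in $\GL(2,\QQ)$, which is symmetric and transitive. This justifies the earlier remark in the text.

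The main obstacle I expect is the lifting step in the converse directions. One must check that $h$ lifts to the infinite cyclic covers and that the lift can be chosen to intertwine $\Phi_B$ with $\Phi_A$ exactly, rather than with some other power. I would handle this by working with $\pi_1$: $h$ maps the fiber subgroup $\ker(\rho_B)_*$ into $\ker(\rho_A)_*$ and sends a loop of $\rho$-degree $1$ to a loop of $\rho$-degree $1$.

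There is also a basepoint subtlety. On the fiber, $\tilde h$ is only homotopic to a linear map $\tau_P$ composed with a translation. This is harmless, since only the action on $H_1$ is used.
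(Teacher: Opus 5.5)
Your proposal is correct and follows essentially the same route as the paper. For the reverse implications you build the fiberwise map $\tau_P\times\Id$ from an integral conjugator (the paper phrases this via the invariant lattice $C(\ZZ^2)$), and for the forward implications you read off the conjugacy from the map induced on fiber homology, with your lift to the infinite cyclic covers satisfying $\tilde h\circ\Phi_B=\Phi_A\circ\tilde h$ making explicit the step the paper states in one line.
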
 
\begin{proof}  If $[A]=[B] \in \GL(2, \QQ)$, then there is $C\in \GL(2,\QQ)$ with $CA=BC$. Multiplying by an integer we may assume $C$ has integer entries. Then $\Gamma=C(\ZZ^2)$ is a lattice
in $\ZZ^2$ of index $|\det C|$ and $CA(\ZZ^2)=BC(\ZZ^2)$. Now $A(\ZZ^2)=\ZZ^2$  so  that $\Gamma=B(\Gamma)$. The map $h:M_A\rightarrow M_B$ given by $h[x,t]=[\tau_{_C}x,t]$ shows that
 $M_A$ is the covering space of $M_B$
given by $\Gamma$. Thus, $M_A$ and $M_B$ are virtually equivalent. In the case that $C\in\
\GL(2,\ZZ)$ then $|\det C|=1$ so $h$ is a homeomorphism, which proves that $M_A$ and $M_B$ are equivalent. 

Conversely, if $h:M_A\rightarrow M_B$ is a virtual equivalence, then it induces an isomorphism on $H_1(\torus;\QQ)$ where the fibers over the basepoint of $M_A$ and $M_B$ are identified with $\torus$.
This implies $A$ and $B$ are conjugate in $\GL(2,\QQ)$. If $h$ is injective as well, then $h$ is a homeomorphism on the fibers and induces an isomorphism on $H_1(\torus; \ZZ)$. Thus, $A$ and $B$ are conjugate in $\GL(2, \ZZ)$. \end{proof}
    
Virtual equivalence is strictly weaker than equivalence. For example, all non-negative unipotent matrices in $\SL(2,\ZZ)$ except the identity  give inequivalent bundles that are virtually equivalent. 

\section{Branched Torus Bundles over Train Tracks}\label{sec: train tracks to brnmfld}

 A \emph{PL branched $n$-manifold} is defined in \cite{CPT} as a pair $(\Wcal,\Pi)$ where $\Wcal$ is a polyhedron and $\Pi$ is an atlas of charts satisfying certain properties. This is similar to, but different from, the definition of a smooth branched manifold originally defined by Williams \cite{MR348794}.  Informally, $\Wcal$ is locally the union of finitely many copies of $\RR^n$, called {\em sheets}, embedded in some high-dimensional Euclidean space so that sheets are tangent whenever they intersect. For simplicity of notation, the atlas $\Pi$ is omitted in this paper and a branched manifold is denoted by $\Wcal$.  Recall that a PL map from a manifold $M$ to a branched manifold $\Wcal$ is an \emph{immersion} if it is locally injective into sheets (see \cite[Definition 5.3]{CPT}).

The following is a generalization of torus bundles over 1-manifolds to the setting of branched manifolds. Train tracks are  PL branched 1-manifolds. The product of a torus $\torus$ and a train track $\Gamma$ is a branched 3-manifold. If $\Gamma$ has two vertices $a$ and $b$  of valence 1 then gluing $\torus\times a$ to $\torus\times b$ in $\torus\times \Gamma$  using a map of finite degree gives another branched 3-manifold by the Gluing Theorem \cite[Theorem 5.13]{CPT}. This procedure can be repeated with other vertices and motivates the following discussion.

A \emph{directed train track} is a train track where each edge has an orientation. The orientation of each edge is independent of the orientations of the other edges. 

\begin{definition}
A \emph{labeled train track} is a directed train track equipped with a map from the set of edges to $\mathcal{L} = \{ A \ | \ A\in M(2,\ZZ), \ \det A \neq 0\}\subset\GL(2,\QQ)$.
\end{definition}

Let $\mathbb{M} \subset\Gamma$ be the set of midpoints of edges and let $Y$ be the closure of a component of $\Gamma\setminus \mathbb{M}$. Then $Y$ is a star graph. Let $X$ be the disjoint union of all such $Y$.

The natural quotient map $q:X\rightarrow\Gamma$ is surjective,
and it is injective on $q^{-1}(\Gamma\setminus\mathbb{M})$.
If $x$ is the midpoint of an edge $e$ in $\Gamma$, then $q^{-1}(x)=\{x_-,x_+\}\subset X$ is
a pair of points, labeled so that $e$ is oriented from the $x_-$ side to $x_+$ side. Let $\bdy_{+}X\subset\bdy X$ be the subset of all $x_{+}$ points, and similarly define $\bdy_-X$.

 Each component $Y$ of $X$ is a branched 1-submanifold of $\Gamma$. Hence, there is a product branched manifold structure on $\torus\times X$ given by \cite[Theorem 5.10]{CPT}.
 Define $$f:=\torus\times\bdy_{-}X\rightarrow \torus\times \bdy_+X$$ as follows. Suppose $x$ is the midpoint of an edge $e$ of $\Gamma$ with $q^{-1}(x)=x_\pm$ and with edge label $A$. Define $f|:\torus\times x_{-}\rightarrow \torus\times x_{+}$ by $f([z],x_{-})=(\tau_A[z],x_+)$. Then, the Gluing Theorem \cite[Theorem 5.13]{CPT} gives a branched manifold $\Wcal_{\Gamma}$ obtained from $\torus\times X$ using the map $f$. Let 
 $$\qwcal:T\times X\longrightarrow\Wcal_{\Gamma}$$ be the associated quotient map. Additionally, the quotient map $\torus\times X\rightarrow\Gamma$  factors through a map $$\pgamma:\Wcal_{\Gamma}\rightarrow\Gamma$$ and the fiber $\pgamma^{-1}(x)$ at $x\in\Gamma$ is homeomorphic to $\torus$ and denoted by $T_x$.

\begin{definition}\label{def:train track mfd}
        $\Wcal_\Gamma$ is the \emph{branched 3-manifold
     associated to the labeled train track $\Gamma$}, and $\pgamma:\Wcal_{\Gamma}\rightarrow\Gamma$ is called a \emph{branched torus bundle over $\Gamma$}.
\end{definition}

An example is shown in \Cref{sol bundles} in \Cref{MainThmsec}. In such a diagram, if an edge does not have a label, then the label is $I$ (the identity matrix). If the edge does not have an orientation, then the label satisfies $A = A^{-1}$, and an arbitrary orientation can be assigned.

Observe that $\bdy\Wcal_{\Gamma}=\torus\times\bdy\Gamma$. The set $\Gamma\setminus\mathbb M$ is called the set of  {\em generic points}. A torus fiber in $\Wcal_{\Gamma}$ is {\em generic}
if it projects to a {\em generic point} of $\Gamma$.
The restriction of $\pgamma$ to the set of generic fibers is a torus fiber bundle, and $\pgamma$ is not a fiber bundle exactly above midpoints of edges labeled by $A$ where $|\det A|\ne 1$. If $x\in \Gamma$ is a generic point, then there is a unique $x'\in X$ with $q(x')=x$. The \emph{marking} at $x$ is a map $q_x$ that identifies a generic torus fiber $T_x\subset\Wcal_{\Gamma}$ with $T$ in the following  unique way $$q_x:T\rightarrow T_x\subset\Wcal_{\Gamma} \qquad \textnormal{ such that }\qquad q_x(z)=\qwcal(z,x').$$
The \emph{ marked basis} of $H_1(T_x;\ZZ)$ is the image of the standard basis of $H_1(T;\ZZ)$ under the induced map $(q_x)_*$. When $\Gamma$ is simply connected, then the image of this basis in $H_1(\Wcal_{\Gamma};\QQ)$ is called \emph{ the marked basis at $x$}. Throughout a basis is \emph{ordered}.

The following proposition shows that if a 3-manifold immerses into a branched torus bundle over a train track, then it is a torus bundle.

\begin{proposition}\label{prop:train track torus bundle}
    If $M$ is a compact connected 3-manifold and $\Omega: (M,\bdy M) \to (\Wcal_\Gamma,\bdy \Wcal_\Gamma)$ is a proper immersion, then there is a compact connected 1-manifold $C$ and a bundle map $\rho: M \to C$ with fiber a torus. Moreover, there is an immersion $\omega: C \to \Gamma$ such that the diagram below commutes, that is to say,  $\pgamma \circ \Omega = \omega \circ \rho$. 

        \[
\begin{tikzcd}
M \arrow{r}{\Omega} \arrow[swap]{d}{\rho} & \Wcal_\Gamma \arrow{d}{\pgamma} \\
C \arrow{r}{\omega} & \Gamma
\end{tikzcd}
\]
    \end{proposition}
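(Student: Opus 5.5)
The plan is to build $C$ as a quotient of $M$ by the fibers of the composite $\pgamma\circ\Omega: M\to\Gamma$. First, look locally. Near a generic point $x\in\Gamma$, the branched manifold $\Wcal_\Gamma$ is locally $\torus\times Y$ for a star graph $Y$. Each sheet of $\Wcal_\Gamma$ through a point of $T_x$ has the form $U\times J$, where $U\subset\torus$ is open and $J$ is an embedded arc in $\Gamma$ passing smoothly through the switch. Since $\Omega$ is an immersion, it is locally injective into sheets. Because $M$ is a 3-manifold, a neighborhood of $m\in M$ therefore maps homeomorphically (PL) onto an open subset of a single sheet, i.e.\ onto some $U\times J$.

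It follows that $\pgamma\circ\Omega$ is locally a projection $U\times J\to J$, with $J$ an arc immersed into $\Gamma$. Near a midpoint of an edge labeled $A$, the structure is $\torus\times[-\epsilon,\epsilon]$ with the ends glued by the finite covering $\tau_A$. Locally this is still a product chart: on one side use the coordinate $[z]$, and on the other side $\tau_A[z]$, with the sheet structure given by the Gluing Theorem \cite[Theorem 5.13]{CPT}. So again the composite is locally a submersion onto an arc. Hence $\pgamma\circ\Omega$ is locally a submersion onto immersed arcs of $\Gamma$, and its level sets form a codimension-one foliation of $M$ by compact surfaces. Each leaf maps by a local homeomorphism onto a torus fiber $T_x$; by properness and compactness this is a finite covering, so every leaf is a torus.

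Second, define $C$ as the leaf space: $\rho:M\to C$ collapses each component of a level set of $\pgamma\circ\Omega$ to a point. Using the local product charts $U\times J$, $C$ is locally homeomorphic to an arc. To see that $C$ is Hausdorff and $\rho$ is a bundle, observe that the compact leaves have product neighborhoods $F\times(-\epsilon,\epsilon)$, obtained by pulling back the local product structure near $T_x$ along the finite cover $F\to T_x$. Thus $C$ is a compact connected 1-manifold, with boundary exactly $\rho(\bdy M)$, since $\Omega$ is proper and $\bdy\Wcal_\Gamma=\torus\times\bdy\Gamma$. The bundle map $\rho$ has torus fibers. Finally, $\omega:C\to\Gamma$ is defined by $\omega(\rho(m))=\pgamma(\Omega(m))$. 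This is well defined because leaves are contained in level sets, and it is an immersion of train tracks because locally it is the immersed arc $J$. The identity $\pgamma\circ\Omega=\omega\circ\rho$ then holds by construction.

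The main obstacle is the local analysis at the switches and at the singular (midpoint) fibers. There one must verify, from the precise definition of sheets in \cite{CPT}, that a locally injective map of a manifold lands in a single sheet of the form (open set in a torus fiber)$\times$(smooth arc through the switch), so that the image cannot "turn" between branches. I would check this first for the product $\torus\times Y$ from \cite[Theorem 5.10]{CPT}, then transport it across the gluing by $\tau_A$, where the covering map only alters the fiber coordinate.
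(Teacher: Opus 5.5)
Your proposal is correct and is essentially the paper's argument. Both use the sheet structure to see that $H=\pgamma\circ\Omega$ is locally a submersion onto arcs of $\Gamma$, note that each fiber component maps by a finite covering onto a torus fiber and is therefore a torus, and assemble these into a torus bundle $\rho:M\to C$ with $\omega$ induced by $H$.

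The only difference is in the local triviality step. The paper applies the fact that the proper PL submersion $H|_{V_j}:V_j\to I_j$, over an arc $I_j$ of a star neighborhood, is a fiber bundle; this avoids describing neighborhoods of singular fibers in $\Wcal_\Gamma$. You instead build the leaf space directly and get product neighborhoods of leaves by pulling back along the finite covers. That needs two small extra points: at a midpoint you must lift through $\tau_A$ on one side, and at a switch you need the easy open--closed observation that an entire leaf continues into the same branch.
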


\begin{proof}
    Suppose $\Omega: M \to \Wcal_\Gamma$ is an immersion. Set $H=\pgamma\circ\Omega$.
    Given $x \in M$ there is a neighborhood $U\subset M$ of $x$ such that $\Omega(U)$ is contained in a sheet of $\Wcal_\Gamma$, and in addition, there is a closed subset $I$ of $\Gamma$ such that $H(U)\subset I\subset \interior(I^+)$, where $I^{+}$ is a sheet of $\Gamma$. Choose $U$ small enough so that $I$ contains at most one vertex of $\Gamma$.

First, suppose that $I$ contains a vertex $v$ of $\Gamma$. Then, $v$ has a star  neighborhood  in $\Gamma$
    that is the union of finitely many arcs $I_1,\cdots, I_k$ which only intersect at $v$
    and contain no other vertices. Label these so that $I=I_1\cup I_2$. 

    Let $V_j$ be a connected component of $H^{-1}(I_j)$ for $j=1,2$. This is a closed subset of $M$.
    Now, $H$ is a PL submersion and $I_j\subset I\subset \interior(I^+)$ for $j = 1,2$,
    so it follows that $H$ is transverse to $\bdy I_j$, and therefore, $V_j$ is a compact 3-manifold with boundary $\bdy V_j=H^{-1}(\bdy I_j)$. 
    
    Since $H|_{V_j}:V_j\rightarrow I_j$ is a proper PL submersion between compact manifolds, it is a fiber bundle. 
    Each fiber first maps into a torus fiber of $\pgamma$ and, because $\Omega$ is an immersion, it finitely covers that torus. Therefore, $H|_{V_j}$ is a product bundle  $\torus\times I_j$. 
    
    If $I$ does not contain a vertex, then this argument simplifies but the same conclusion is obtained. Hence, each
    $x \in M$ has a neighborhood $V\cong \torus\times (I_1\cup I_2)\subset M$ and $H|_V:V\rightarrow I$ is a fiber bundle with fiber a torus. This gives  a torus bundle $\rho: M \to C$, where $C$ is a compact 1-manifold. Therefore, the immersion $\omega$ is defined by $\omega(c) = y$, where $y$ is the image of the torus $\rho^{-1}(c)$ under $H$, so that the diagram commutes. \end{proof}

\begin{definition}
    If $C\cong I$ in \Cref{prop:train track torus bundle},  the map $\omega$ is called the \emph{immersion path} for $\Omega$. If $C\cong \SS^1$, the map $\omega$ is called the \emph{immersion loop} for $\Omega$. 
\end{definition}

Next, we define the \emph{monodromy of a loop} and the \emph{holonomy along a path} in $\Gamma$, and relate these concepts to the monodromies of torus bundles that admit immersions into $\Wcal_\Gamma$.

 Let $\widetilde{\Gamma}$ be the universal cover of $\Gamma$.
 The map $\pgamma:\Wcal_\Gamma\rightarrow\Gamma$ is $\pi_1$-surjective.  Let $\Wcal_{\widetilde \Gamma}$ be the covering space  of $\Wcal_{\Gamma}$ given by $\ker \left(\pgamma\right)_*$. Now $\widetilde{\Gamma}$ is a labeled train track in an obvious way, and there is a projection $\widetilde{\pgamma}:\Wcal_{\widetilde\Gamma}\rightarrow\widetilde\Gamma$ that covers $\pgamma$. 
The branched manifold $\Wcal_{\widetilde{\Gamma}}$ is  the quotient
of copies of the product of $\torus$ with various trees, where boundary tori are glued by linear maps that induce isomorphisms on  rational homology. 
 It follows by
induction on the number of copies of $T\times ({\rm tree})$ and Mayer-Vietoris that 
$$\left(q_{\widetilde{x}}\right)_*:H_1(T;\QQ)\longrightarrow H_1(\Wcal_{\widetilde\Gamma};\QQ)$$
is an isomorphism. A covering transformation of $\widetilde\Gamma$ is uniquely covered by one of $\Wcal_{\widetilde\Gamma}$. Thus, there is an action of $\pi_1(\Gamma,x)$ induced by covering  transformations
  on $H_1(\Wcal_{\widetilde\Gamma};\QQ)$ that only depends on a choice of a basepoint $\widetilde x\in\widetilde\Gamma$ covering $x$.

 \begin{definition}\label{def:monodromy} The \emph{monodromy homomorphism} 
   $\mon:\pi_1(\Gamma,x)\longrightarrow\Aut H_1(\Wcal_{\widetilde\Gamma};\QQ)$
is $$\mon(\gamma)=\tau_*$$ where $\tau$ is the covering transformation of $\Wcal_{\widetilde \Gamma}$ given by $\gamma$. 
     \end{definition}

In the following, basepoints are chosen so that maps send basepoints to basepoints. 
For an immersion path $\omega:I\rightarrow\Gamma$, the basepoint is chosen to be $x=\omega(0)$. An immersion loop determines an element of $\pi_1(\Gamma,x)=\pi_1(\Gamma)$ denoted by  $\omega$.
Recall the definition of $M_{\phi}$, $\widetilde M$, and $\Phi$ in \Cref{sec: torus bundles} and the identification $\Phi_*=\phi_*$.

\begin{proposition}\label{MA immersion} Suppose $\Omega:M_{\phi}\rightarrow\Wcal_{\Gamma}$ is an immersion with immersion loop $\omega$. Then
$$\phi_*=(\widetilde\Omega)_*^{-1}\circ\mon(\omega)\circ(\widetilde\Omega)_*.$$
\end{proposition}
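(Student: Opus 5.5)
We prove the formula by lifting $\Omega$ to universal-type covers and comparing deck transformations. Here $\widetilde\Omega$ is the lift $\widetilde M\to\Wcal_{\widetilde\Gamma}$, and $(\widetilde\Omega)_*$ is the induced map $H_1(\widetilde M;\QQ)\to H_1(\Wcal_{\widetilde\Gamma};\QQ)$, with $H_1(\widetilde M;\QQ)$ identified with $H_1(\torus;\QQ)$ via $\torus\times 0$.

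\emph{Step 1: construct the lift.} By \Cref{prop:train track torus bundle}, $\pgamma\circ\Omega=\omega\circ\rho_\phi$, where $\rho_\phi:M_\phi\to\SS^1$ is the bundle map. The covering $\widetilde M=\torus\times\RR\to M_\phi$ corresponds to $\ker(\rho_\phi)_*$. Its image under $\Omega_*$ lies in $\ker(\pgamma)_*$, because $(\pgamma)_*\Omega_*=\omega_*(\rho_\phi)_*$. Hence, after choosing basepoints, $\Omega$ lifts to a map $\widetilde\Omega:\widetilde M\to\Wcal_{\widetilde\Gamma}$. The map $\widetilde\Omega$ covers a lift $\widetilde\omega:\RR\to\widetilde\Gamma$ of $\omega$ and satisfies $\widetilde\pgamma\circ\widetilde\Omega=\widetilde\omega\circ\mathrm{pr}_\RR$.

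\emph{Step 2: equivariance.} Let $\tau$ be the covering transformation of $\Wcal_{\widetilde\Gamma}$ corresponding to the loop $\omega$. Since $\widetilde\omega(t+1)=\gamma\cdot\widetilde\omega(t)$, where $\gamma$ is the deck transformation of $\widetilde\Gamma$ given by $\omega$, both $\widetilde\Omega\circ\Phi$ and $\tau\circ\widetilde\Omega$ are lifts of $\Omega\circ(\text{covering map})$. Both send the basepoint $(x_0,0)$ to the same point of $\Wcal_{\widetilde\Gamma}$: this holds because $\tau$ is the unique deck transformation covering $\gamma$, and the fiber of $\Wcal_{\widetilde\Gamma}\to\Wcal_\Gamma$ over a point is in bijection with the fiber of $\widetilde\Gamma\to\Gamma$. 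Uniqueness of lifts then gives $\widetilde\Omega\circ\Phi=\tau\circ\widetilde\Omega$. Applying $H_1(\,\cdot\,;\QQ)$ and using $\Phi_*=\phi_*$ and $\tau_*=\mon(\omega)$ yields
\[
(\widetilde\Omega)_*\circ\phi_*=\mon(\omega)\circ(\widetilde\Omega)_*.
\]

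\emph{Step 3: invertibility of $(\widetilde\Omega)_*$.} Restricted to the fiber $\torus\times 0$, the map $\widetilde\Omega$ is a finite covering of a torus fiber $T_{\widetilde x}$. So it induces an isomorphism on $H_1(\,\cdot\,;\QQ)$ onto $H_1(T_{\widetilde x};\QQ)$. This group maps isomorphically onto $H_1(\Wcal_{\widetilde\Gamma};\QQ)$ by the Mayer--Vietoris fact stated before \Cref{def:monodromy}. The inclusion $\torus\times 0\to\widetilde M$ is also a homotopy equivalence. Therefore $(\widetilde\Omega)_*$ is an isomorphism, and rearranging the equation in Step 2 gives the formula.

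The main obstacle is the bookkeeping in Step 2. One must confirm that $\tau$ matches $\Phi$ with the correct direction and basepoint conventions, so that one gets $\mon(\omega)$ rather than its inverse. This uses the convention that $\Phi(x,t)=(\phi(x),t+1)$ covers translation along $\omega$ in its positive direction.
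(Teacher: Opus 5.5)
Your proposal is correct and follows the paper's proof: you lift $\Omega$ to $\widetilde\Omega:\widetilde M\to\Wcal_{\widetilde\Gamma}$, show $\widetilde\Omega\circ\Phi=\tau\circ\widetilde\Omega$ by uniqueness of lifts after checking agreement at one point, and pass to rational $H_1$. The differences are minor. The paper checks the one-point agreement by lifting a loop $\alpha$ in $M$ with $\pgamma\circ\Omega\circ\alpha=\omega$, whereas you use that fibers of $\Wcal_{\widetilde\Gamma}\to\Wcal_\Gamma$ inject into fibers of $\widetilde\Gamma\to\Gamma$; you also justify that $(\widetilde\Omega)_*$ is a rational isomorphism (fiberwise finite covering plus the Mayer--Vietoris fact), which the paper only asserts.
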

\begin{proof} Write $M=M_{\phi}$. There is a loop $\alpha:I\rightarrow  M$ so that $\rho_{\phi}\circ\alpha$ is a generator of $\pi_1
(\SS^1)$, and such that $\omega=\pgamma\circ\Omega\circ\alpha$ is an immersion. Let $\pi_M: \widetilde{M}\rightarrow M$ be the infinite cyclic cover given by unwrapping the circle base space of $M$. It follows that $\Omega$ is covered by an immersion $\widetilde{\Omega}:\widetilde{M}\rightarrow\Wcal_{\widetilde\Gamma}$. Moreover, $\widetilde{\Omega}_*$
is an isomorphism on rational homology. Define  $\Phi$ to be the covering transformation of $\widetilde{M}=T\times\RR$ given by $\Phi(x,t)=(\phi(x),t+1)$. 

 Choose a lift $\widetilde{\alpha}: I \to \widetilde{M}$ of $\alpha$ and let $\widetilde{\alpha}(0) =m$. Then, by the choice of $\alpha$ it follows that $\widetilde{\alpha}(1) = \Phi(m)$. Set $\widetilde{\omega}=\widetilde{\pgamma}\circ\widetilde{\Omega}\circ\widetilde{\alpha}.$ Then $\widetilde{\omega}:I\to\widetilde{\Gamma}$ is a lift of
$\omega=\pgamma\circ\Omega\circ\alpha$. Let $\tau'$ be the deck
transformation of $\widetilde{\Gamma}$ sending
$\widetilde{\omega}(0)$ to $\widetilde{\omega}(1)$, and let $\tau$
be the deck transformation of $\Wcal_{\widetilde{\Gamma}}$ covering
$\tau'$, that is to say, $\tau$ is the covering transformation given by $\omega$.  

  Now, $\widetilde{\Omega}\circ \widetilde{\alpha}$ is the lift of $\Omega \circ \alpha$ beginning at $\widetilde{\Omega}(m)$, and since $\omega=\pgamma\circ\Omega\circ\alpha$, the projection of $\widetilde{\Omega}\circ \widetilde{\alpha}$ to $\widetilde{\Gamma}$ is $\widetilde{\omega}$. Hence, the covering transformation $\tau$ sends the initial point of $\widetilde{\Omega}\circ \widetilde{\alpha}$ to its terminal point so that $\tau \circ \widetilde{\Omega}(m) = \widetilde{\Omega}\circ\Phi(m)$. Since $\tau \circ \widetilde{\Omega}$ and $\widetilde{\Omega}\circ\Phi$ are lifts of the same map $\Omega \circ \pi_M : \widetilde{M} \to \Wcal_\Gamma$ and $\widetilde{M}$ is connected, uniqueness of lifts implies that the following diagram commutes.
\[\begin{tikzcd}
\widetilde{M} \arrow{r}{\Phi} \arrow[swap]{d}{\widetilde{\Omega} }& \widetilde{M} \arrow{d}{\widetilde{\Omega}}\\
\Wcal_{\widetilde\Gamma} \arrow{r}{\tau} & \Wcal_{\widetilde\Gamma}
\end{tikzcd}\]

Now, $\widetilde{\Omega}_*:H_1(\widetilde M;\QQ)\rightarrow H_1(\Wcal_{\widetilde\Gamma};\QQ)$ is an isomorphism, and so $\Phi_*={\widetilde\Omega}_*^{-1}\circ\tau_*\circ{\widetilde{\Omega}}_*$.
\end{proof}

Suppose that $\widetilde{x}\in\widetilde{\Gamma}$ is a generic point. The marking $q_{\widetilde x}$ induces an isomorphism on rational homology
$H_1(T;\QQ)\rightarrow H_1(\Wcal_{\widetilde{\Gamma}};\QQ)$. 
Given another generic point $\widetilde{y}\in\widetilde{\Gamma}$ there is an automorphism
\beq{holonomy}\hol(\widetilde{x},\widetilde{y})=
\left(q_{\widetilde{y}}\right)_*\circ \left(q_{\widetilde{x}}\right)_*^{-1}\in\Aut H_1(\Wcal_{\widetilde{\Gamma}};\QQ),\eeq
which compares the local product structures in $\Wcal_{\widetilde{\Gamma}}$ at two different generic points. 
It sends the marked basis at $\widetilde x$ to the marked basis at $\widetilde y$.

\begin{definition}\label{def:holonomy}
    Suppose that  $\omega: [0,1] \to \Gamma$ is an immersed path between two generic points. Let $\widetilde{\omega}:[0,1]\rightarrow\widetilde{\Gamma}$ be the lift with $\widetilde{x}=\widetilde{\omega}(0)$ the basepoint of $\Wcal_{\widetilde\Gamma}$ and $\widetilde{y}=\widetilde{\omega}(1)$. The \emph{holonomy along $\omega$} is 
$\hol(\omega)=\hol(\widetilde{x},\widetilde{y})\in\Aut H_1(\Wcal_{\widetilde{\Gamma}};\QQ).$
\end{definition}

\Cref{lem: immersions from pathsa} expresses holonomy in terms of the edge labels $\omega$ traverses. Holonomy and monodromy agree for loops, however holonomy is defined more generally and enables the computation of monodromy.

\begin{definition}\label{def: pathprod}
Suppose that  $\omega: [0,1] \to \Gamma$ is an immersed path with generic endpoints, and $\omega^{-1}(\mathbb{M} )$ consists of points $m_1<m_2<\cdots<m_k$ where $\omega(m_i)$ is the midpoint of an edge $e_i\subset\Gamma$. If $A_i$ is the label on $e_i$, define $B_i=A_i^{\pm1}\in \GL(2,\QQ),$ where the sign is $1$ if and only if the direction the path crosses $\omega(m_i)$ is given by the orientation of $e_i$. The \emph{product of inverse matrices along $\omega$} is $A(\omega) = B_1^{-1}B_{2}^{-1}\cdots B_k^{-1}$. 
\end{definition}

If the linear map $F\in\Aut H_1(\Wcal_{\widetilde\Gamma};\QQ)$ and $\widetilde a,\widetilde{b}\in\widetilde\Gamma$, then $(F;\widetilde a,\widetilde b)$ is the matrix of $F$ using the marked basis at $\widetilde a$ in the domain and the marked basis at $\widetilde b$ in the codomain. 

\begin{lemma}\label{lem: immersions from pathsa}
     Suppose that  $\omega: [0,1] \to \Gamma$ is an immersed path between two generic points $x=\omega(0)$ and $y=\omega(1)$, and that $\widetilde{x}$ is a lift of $x$ to $\widetilde{\Gamma}$. Then
 \beq{matrixeqtn}  (\hol(\omega);\widetilde{x},\widetilde{x})=A(\omega).\eeq
 If $x=y$, then  $$\mon(\omega)= \hol(\omega).$$
\end{lemma}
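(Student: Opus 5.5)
The plan is to reduce \eqref{matrixeqtn} to a telescoping product of local computations, one for each midpoint crossed. Work in $\Wcal_{\widetilde\Gamma}$ and write $q_{\widetilde a}$ for the marking at a generic point $\widetilde a\in\widetilde\Gamma$. First observe that
$$(\hol(\omega);\widetilde x,\widetilde x)=(q_{\widetilde x})_*^{-1}\circ \hol(\widetilde x,\widetilde y)\circ (q_{\widetilde x})_*=(q_{\widetilde x})_*^{-1}\circ (q_{\widetilde y})_*,$$
viewed as an automorphism of $H_1(T;\QQ)$ written in the standard basis. Choose generic points $\widetilde x=\widetilde a_0,\widetilde a_1,\dots,\widetilde a_k=\widetilde y$ along the lift $\widetilde\omega$, with exactly one midpoint crossing between consecutive points. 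Then
$$(q_{\widetilde x})_*^{-1}(q_{\widetilde y})_*=\prod_{i=1}^k (q_{\widetilde a_{i-1}})_*^{-1}(q_{\widetilde a_i})_*.$$
So it suffices to show that each factor equals $B_i^{-1}$, and the factors then multiply in the order $B_1^{-1}\cdots B_k^{-1}$.

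Two local facts are needed.

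First, if two generic points lie in the same component of $\widetilde\Gamma\setminus\mathbb M$, they lie in a single star $Y$. There the markings are $z\mapsto \qwcal(z,x')$ for points $x'$ of the product $T\times Y$, so the two markings are homotopic through markings. Hence they induce the same map on homology, and the corresponding factor is $I$. This also shows that the choice of the $\widetilde a_i$ does not matter.

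Second, consider crossing the midpoint of an edge $e$ labelled $A$. Suppose first the crossing is in the direction of the orientation of $e$, so $B=A$. Take $\widetilde a$ on the $x_-$ side and $\widetilde b$ on the $x_+$ side. By the first fact we may slide both points to $x_\mp$ themselves. The gluing $(z,x_-)\sim(\tau_A z,x_+)$ gives $q_{\widetilde a}=q_{\widetilde b}\circ\tau_A$. Therefore $(q_{\widetilde a})_*=(q_{\widetilde b})_*\circ A$, and so $(q_{\widetilde a})_*^{-1}(q_{\widetilde b})_*=A^{-1}=B^{-1}$. If the crossing is against the orientation, the roles of $\widetilde a$ and $\widetilde b$ swap. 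The factor is then $A=B^{-1}$, using that $\tau_A$ induces the invertible map $A$ on rational homology. Combining the two facts with the telescoping product proves \eqref{matrixeqtn}.

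For the monodromy statement, let $x=y$ and let $\tau$ be the covering transformation of $\Wcal_{\widetilde\Gamma}$ covering the deck transformation $\tau'$ with $\tau'(\widetilde x)=\widetilde y$. The labelled structure on $\widetilde\Gamma$ is pulled back from $\Gamma$, so $\Wcal_{\widetilde\Gamma}$ is built from copies of $T\times Y$ indexed by lifts of the stars. The map $\tau$ permutes these copies by the identity on the $T$ factor. Hence $\tau\circ q_{\widetilde x}=q_{\widetilde y}$, because both markings come from the same marking at $x$ in $\Wcal_\Gamma$. It follows that $\mon(\omega)=\tau_*=(q_{\widetilde y})_*(q_{\widetilde x})_*^{-1}=\hol(\widetilde x,\widetilde y)=\hol(\omega)$.

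I expect the main obstacle to be bookkeeping rather than ideas. The conventions must be made consistent: the direction of the gluing $f$ from $x_-$ to $x_+$, the sign convention defining $B_i$, and the order of composition in $\hol$. Together these must produce $B_1^{-1}\cdots B_k^{-1}$ rather than $B_k^{-1}\cdots B_1^{-1}$ or the product of the $B_i$ themselves. The identity $(q_{\widetilde x})_*^{-1}(q_{\widetilde y})_*=\prod_i(q_{\widetilde a_{i-1}})_*^{-1}(q_{\widetilde a_i})_*$ makes the left-to-right order transparent, so I would fix that identity first and check the single-crossing case carefully against the definition of $f$.
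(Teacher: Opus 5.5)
Your proof is correct and is essentially the paper's argument. The single-crossing computation from the gluing relation $q_{\widetilde a}=q_{\widetilde b}\circ\tau_A$ is the paper's base case, and the monodromy part is the paper's identity $\tau\circ q_{\widetilde x}=q_{\widetilde y}$. Your telescoping identity $(q_{\widetilde x})_*^{-1}(q_{\widetilde z})_*=(q_{\widetilde x})_*^{-1}(q_{\widetilde y})_*\cdot(q_{\widetilde y})_*^{-1}(q_{\widetilde z})_*$ is exactly the paper's inductive formula $(\hol(\widetilde x,\widetilde z);\widetilde x,\widetilde x)=(\hol(\widetilde x,\widetilde y);\widetilde x,\widetilde x)\cdot(\hol(\widetilde y,\widetilde z);\widetilde y,\widetilde y)$; you get it in one line instead of through the change-of-basis matrices.

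You also make explicit that markings within a single star are homotopic, so generic points can be slid to $x_\pm$. The paper uses this step silently in its base case.
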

\begin{proof} 
To prove \Cref{matrixeqtn}, let $k$ be as in \Cref{def: pathprod} and proceed by induction. First suppose that $k=1$ and the path crosses the midpoint of only one edge, $e$,  and in the same direction as the orientation of the edge. Then $B = B_1$ is the label on $e$ and $A =A(\omega)= B^{-1}$. Let $\widetilde{\omega}: I \to \widetilde{\Gamma}$ be a lift of $\omega$ that starts at $\widetilde{x}$ and ends at $\widetilde{y}$.
Let $([\alpha_1], [\beta_1])$ and $([\alpha_2], [\beta_2])$ be the marked bases at $\widetilde{x}$ and $\widetilde{y}$ of $H_1(\Wcal_{\widetilde\Gamma}; \QQ)$, respectively. The definition of the gluing map $B$ gives
$$[\alpha_1] = B[\alpha_2]  \quad \textnormal{ and } \quad [\beta_1] = B[ \beta_2].$$
Since $\hol(\widetilde x,\widetilde y)$ takes the marked basis at $\widetilde{x}$ to the marked basis at $\widetilde{y}$,
$$\hol(\widetilde{x},\widetilde{y})([\alpha_1]) = [\alpha_2]= B^{-1}[\alpha_1]\quad  \textnormal{ and }  \quad \hol(\widetilde{x},\widetilde{y})([\beta_1]) = [\beta_2]= B^{-1}[\beta_1].$$ Thus, $$(\hol(\widetilde{x},\widetilde{y});\widetilde x,\widetilde x)=B^{-1}= A.$$

If $\omega$ traverses $e$ in the negative direction, the result follows from switching the roles of $x$ and $y$. This completes the proof when $k=1$.

Inductively, suppose the result holds for all $n\leq k$ and that $\omega$ crosses $k+1$ midpoints. Let $\widetilde{x}$ and $\widetilde z$ be
the start and end of $\omega$ and $\widetilde{y}$ the endpoint of the penultimate edge.
Write $C=B_1^{-1}B_2^{-1}\cdots B_k^{-1}$ and $B=B_{k+1}^{-1}$.
By induction $(\hol(\widetilde{x},\widetilde{y});\widetilde x, \widetilde x)=C$ and
 $(\hol(\widetilde{y},\widetilde{z});\widetilde{y}, \widetilde{y}) = B$.  
The definition of holonomy in \Cref{holonomy} implies
$$\hol(\widetilde{x},\widetilde{z}) =\hol(\widetilde{y},\widetilde{z}) \circ \hol(\widetilde{x},\widetilde{y}).$$
Using $\cdot$ for matrix product, with respect to bases this becomes
\beq{eq5}(\hol(\widetilde{x},\widetilde{z});\widetilde{x},\widetilde x)  = (\hol(\widetilde{y},\widetilde{z});\widetilde{x},\widetilde x) \cdot (\hol(\widetilde{x},\widetilde{y});\widetilde{x},\widetilde x).\eeq
Computing $(\hol(\widetilde{y},\widetilde{z});\widetilde{x},\widetilde{x})$ 
involves the following change of basis
\beq{eq7}(\hol(\widetilde{y},\widetilde{z});\widetilde{x},\widetilde x) = (\Id;\widetilde y,\widetilde x)\cdot(\hol(\widetilde{y},\widetilde{z});\widetilde{y},\widetilde y)\cdot(\Id;\widetilde x,\widetilde y).\eeq
If a linear map $F$ sends the marked basis at $\widetilde x$ to the marked basis at $\widetilde y$, then
$$(F;\widetilde x,\widetilde x)=(\Id;\widetilde y,\widetilde x).$$
Applying this to $F=\hol(\widetilde x,\widetilde y)$ gives
\beq{keyeqtn}(\Id;\widetilde y,\widetilde x) =(\hol(\widetilde x,\widetilde y);\widetilde x,\widetilde x)= C.\eeq
Substituting into \Cref{eq7} gives
$$(\hol(\widetilde{y},\widetilde{z});\widetilde{x},\widetilde x) = (\hol(\widetilde x,\widetilde y);\widetilde x,\widetilde x)\cdot(\hol(\widetilde{y},\widetilde{z});\widetilde{y},\widetilde y)\cdot(\hol(\widetilde x,\widetilde y);\widetilde x,\widetilde x)^{-1}.
$$
Substituting this into \Cref{eq5}  gives

\gap
$$\begin{array}{rcl}
(\hol(\widetilde{x},\widetilde{z});\widetilde{x},\widetilde x) & = &
(\hol(\widetilde x,\widetilde y);\widetilde x,\widetilde x)\cdot(\hol(\widetilde{y},\widetilde{z});\widetilde{y},\widetilde y)\cdot(\hol(\widetilde x,\widetilde y);\widetilde x,\widetilde x)^{-1}\cdot (\hol(\widetilde{x},\widetilde{y});\widetilde{x},\widetilde x)\\
& = &(\hol(\widetilde x,\widetilde y);\widetilde x,\widetilde x)\cdot(\hol(\widetilde{y},\widetilde{z});\widetilde{y},\widetilde y)\\
&=& C\cdot B = B_1^{-1} \cdots B_{k}^{-1}B_{k+1}^{-1}=A(\omega).
\end{array}$$

\gap 
This concludes the proof of \Cref{matrixeqtn}.
For the second claim, suppose that $x=y$ and $\widetilde{\omega}:I\rightarrow\widetilde{\Gamma}$ covers $\omega$. Set $\widetilde{x}=\widetilde{\omega}(0)$ and $\widetilde{y}=\widetilde{\omega}(1)$.
Let $\tau$ be the covering transformation of $\Wcal_{\widetilde\Gamma}$ corresponding to $\widetilde\omega$. Then $id_{_T}=q_{\widetilde y}^{-1}\circ\tau\circ q_{\widetilde x}$ so

$$id=\left(q_{\widetilde y}\right)_*^{-1}\circ\tau_*\circ \left(q_{\widetilde x}\right)_*\in\Aut\ H_1(T;\QQ),$$
 and thus
$$\mon(\omega)=\tau_*=
(q_{\widetilde y})_*\circ (q_{\widetilde x})_*^{-1}=\hol(\omega)\in \Aut\ H_1(\Wcal_{\widetilde\Gamma};\QQ).$$ \end{proof}

Let $\pi:\Wcal_{\widetilde\Gamma}\rightarrow\Wcal_{\Gamma}$ be the covering space projection.  Then $q_x=\pi\circ q_{\widetilde x}$ where $\widetilde{x} \in \widetilde{\Gamma}$ maps to $x \in \Gamma$.
Let $T_x\subset\Wcal_{\Gamma}$ be the torus fiber above $x$.
Then $(q_x)_*H_1(T;\ZZ)$ is the image of $H_1(T_x;\ZZ)$ in $H_1(\Wcal_{\Gamma};\ZZ)$.

\begin{theorem}\label{thm: immersions from paths}
   (1)   Suppose that $M$ is a closed 3-manifold and $\Omega: M \to \Wcal_\Gamma$ is an immersion with immersion loop $\omega: \SS^1 \to \Gamma$. Then $M\cong M_A$ and $[(\mon(\omega);\widetilde x,\widetilde x)]= [A] \in [\GL(2, \QQ)].$ Thus, the only other manifolds that immerse into $\Wcal_{\Gamma}$ with immersion path $\omega$ are virtually equivalent to $M$.
   
(2) If $\omega$ is an immersion loop for $\Omega:M\rightarrow \Wcal_{\Gamma}$, then $\omega$ is also an immersion loop for every torus bundle virtually equivalent to $M$.

(3)(a) Suppose that $\omega: I \to \Gamma$ is an immersion with generic endpoints and $\omega(0) = x$.  Then
there exists $n>0$ and an immersion $\Omega: \torus \times I \to \Wcal_\Gamma$
with immersion path $\omega$ such that
   $$\Omega_*H_1(T\times 0;\ZZ)= n\cdot (q_x)_*H_1(T;\ZZ).$$
   (b) Let $\widetilde{\omega}:I\rightarrow\widetilde\Gamma$ be a lift of $\omega$ and $\widetilde{x}=\widetilde{\omega}(0)$. If $(\hol(\widetilde y,\widetilde{x});\widetilde{x},\widetilde{x})$ is  an integer matrix whenever $\widetilde y=\widetilde\omega(t)$ is generic,  then $n=1$ in $(a)$.

(4) Suppose that $\omega: \SS^1 \to \Gamma$ is an immersion and   $
  (\mon(\omega); \widetilde{x}, \widetilde{x})=A \in \GL(2,\ZZ)$. Then there is  an immersion $\Omega: M_A \to \Wcal_\Gamma$ with immersion loop $\omega$.

 \end{theorem}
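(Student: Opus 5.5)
Part (1) is mostly bookkeeping. By \Cref{prop:train track torus bundle}, the immersion $\Omega$ gives a torus bundle structure $\rho:M\to C$ with $C\cong\SS^1$, and the immersion loop satisfies $\pgamma\circ\Omega=\omega\circ\rho$. Hence $M\cong M_\phi$ for some homeomorphism $\phi$, and since $\MCG(\torus)\cong\GL(2,\ZZ)$ we may take $\phi=\tau_A$ with $A\in\GL(2,\ZZ)$. \Cref{MA immersion} then gives $\phi_*=(\widetilde\Omega)_*^{-1}\circ\mon(\omega)\circ(\widetilde\Omega)_*$, where $(\widetilde\Omega)_*$ is a rational isomorphism. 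Writing this in the standard basis for $H_1(\widetilde M;\QQ)$ and the marked basis at $\widetilde x$ shows that $A$ and $(\mon(\omega);\widetilde x,\widetilde x)$ are conjugate in $\GL(2,\QQ)$. The final sentence of (1) follows from the second bullet of \Cref{virtequiv}.

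For (2), let $M'$ be virtually equivalent to $M$, so there is a fiber-preserving covering $h:M'\to M$ commuting with the projections to $\SS^1$. Then $\Omega\circ h$ is again an immersion, since a composition of a covering map and an immersion is locally injective into sheets. Its immersion loop is $\omega$, because $\pgamma\circ\Omega\circ h=\omega\circ\rho\circ h=\omega\circ\rho'$.

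For (3a), I would construct $\Omega$ edge by edge along $\omega$. Over the generic portions, $\Wcal_\Gamma$ is locally $T\times(\text{arc})$, so we use a product map. At each crossing of a midpoint labeled $B_i$ in direction $\pm$, we need to continue the product map across the gluing $\tau_{B_i}$. To do this, start with the map $T\times I\to\Wcal_\Gamma$ given by $\tau_N$ on the fiber, where $N=n\cdot\Id$. Crossing the gluing in the positive direction requires the fiber map after crossing to be $\tau_{B_iN'}$, where $N'$ is the current map. Crossing in the negative direction requires the new fiber map $N''$ to satisfy $B_iN''=N'$, i.e.\ $N''=B_i^{-1}N'$. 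This is integral as long as $N'$ is divisible enough. So I choose $n$ to be a common multiple of the denominators of all partial products $B_{j}^{\pm1}\cdots$ arising along the path; concretely, $n=\prod_i|\det A_i|$ suffices, since $\det(A)A^{-1}$ is integral. The resulting map is an immersion because each fiber map $\tau_{C}$ with $\det C\ne0$ is a local homeomorphism, and the image of $H_1(T\times0)$ is $n\cdot(q_x)_*H_1(T)$.

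For (3b), the fiber matrix at the generic point $\widetilde y=\widetilde\omega(t)$ is exactly $(\hol(\widetilde y,\widetilde x);\widetilde x,\widetilde x)$, by \Cref{lem: immersions from pathsa} and the change-of-basis identity (\ref{keyeqtn}) used in its proof. If these matrices are all integral, we may take $n=1$.

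For (4), apply the construction of (3) with $n=1$ to the path obtained by cutting $\omega$ at $x$. The holonomies are integral because, with $n=1$, the final fiber map is the marking composed with $A$ or $A^{-1}$ by \Cref{lem: immersions from pathsa}, and $A\in\GL(2,\ZZ)$. The intermediate matrices, however, need not be integral. This is the main obstacle. To handle it, I would instead build a map of $T\times I$ using some $n$, obtaining endpoint fiber maps $\tau_{nI}$ and $\tau_{nA^{\pm1}}$. These glue to an immersion of $M_A$ via the identification $(x,0)\sim(\tau_A x,1)$, since $nA=A\cdot nI$ commutes appropriately. The key checks are that the identification is compatible with the multiplication-by-$n$ maps, and that the glued map is locally injective into sheets across the seam.
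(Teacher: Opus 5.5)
Your proposal is correct. Parts (1), (2) and (3b) follow the paper essentially verbatim; the paper's (3b) also reduces to integrality of $B_j\cdots B_1$, phrased as $G_1\subset G_j$. For (3a) and (4) you take a different, more explicit route.

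The paper argues with covering spaces:
\begin{itemize}
\item For (3a) it lifts $\omega$ to $\widetilde\Gamma$ and lets $G_i\subset H_1(\Vcal;\ZZ)$ be the images of the pieces $\torus\times Y_i$ over the arc. It chooses $n$ with $nG_1\subset\bigcap_i G_i$ and passes to the finite cover of $\Vcal$ determined by $nG_1$. There every gluing lifts to a homeomorphism, so the preimage of the lifted path is an honest $\torus\times I$.
\item For (4) it repeats this over the $\omega$-invariant line in $\widetilde\Gamma$. It lifts the deck transformation, using that $A\in\GL(2,\ZZ)$ preserves $n\ZZ^2$, takes the quotient, and then appeals to (1) and (2) to reach $M_A$.
\end{itemize}

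You instead write the map down directly, with fiber matrices $N_j=nB_j\cdots B_1$ in the notation of \Cref{def: pathprod}. Your crossing rule uses ``$B_i$'' for the label itself in both directions; that is fine, but it clashes with that definition.

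The two constructions give the same object, since $nG_1\subset G_j$ is exactly integrality of $N_j$, and the paper's $\widehat{\Vcal}\,'$ is your $\torus\times I$. Your version gives an explicit $n$ and makes (3b) a tautology. In (4) it produces $M_A$ itself, with no infinite cover, no lifting of deck transformations, and no appeal to (1) and (2). The paper's version avoids tracking crossing directions and stays inside its covering-space framework.

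The sign you left open in (4) is forced. By \Cref{lem: immersions from pathsa}, $N_k=n\,(\hol(\omega);\widetilde x,\widetilde x)^{-1}=nA^{-1}$. So the identity you need is $\tau_{nA^{-1}}\circ\tau_A=\tau_{nI}$, which is exactly compatibility with the identification $(w,0)\sim(\tau_A w,1)$ defining $M_A$. The identity $nA=A\cdot nI$ that you cite is not the relevant one; a terminal matrix $nA$ would glue up $M_{A^{-1}}$ instead.

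Your remaining ``key check'' is immediate. Near the seam, the glued map is $(w,s)\mapsto(\tau_n w,\omega(s))$ in the product neighbourhood of the generic fiber $T_x$, and $\omega$ is an immersion at the basepoint.
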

\begin{proof} 
(1) By \Cref{prop:train track torus bundle}, $M\cong M_A$ for some
$A \in \GL(2,\ZZ)$, and the result follows by \Cref{MA immersion} and \Cref{virtequiv}.

\gap
(2) If $h:N\rightarrow M$ is a virtual equivalence, then 
$\Omega\circ h:N\rightarrow\Wcal_{\Gamma}$ is an immersion that also has  immersion loop $\omega$.

\gap 

 The proof of (3) and (4) uses subspaces of  covering spaces of $\Wcal_{\Gamma}$. These are branched manifolds associated to labeled graphs with projections $p_i:\Wcal_i\rightarrow\Gamma_i$. Each has 
a natural fiber-preserving  map $\Wcal_i\rightarrow\Wcal_{\Gamma}$. Either $\Wcal_{i}$ is a covering space of, or a subspace
of,  $\Wcal_{i-1}$.  Choose basepoints for fundamental groups $x_i\in\Gamma_i$ that project to $x\in\Gamma$
and $w_i\in\Wcal_i$ with $p_i(w_i)=x_i$. Start with  $\Gamma_0=\Gamma$, and $\Gamma_1=\widetilde{\Gamma}$, so $\Wcal_0=\Wcal_{\Gamma}$ and $\Wcal_1=\Wcal_{\widetilde\Gamma}$. 
\gap

\noindent (3a) Let $\omega_1:I\rightarrow\Gamma_1$ be the lift of $\omega$ with 
$\omega_1(0)=x_1$.
There is a minimal sequence of edges $J=\widetilde{e_1}\cdot\widetilde{e}_2\cdots\widetilde{e_k}\subset\Gamma_1$ that form an arc in $\Gamma_1$ that contains $\omega_1(I)$.
Then $J$ is a labeled train track. This determines the compact branched 3-manifold
 $\Vcal=p_1^{-1}(J)\subset\Wcal_1 = \Wcal_{\widetilde{\Gamma}}$. The covering map $\pi: \Wcal_{\widetilde{\Gamma}} \to \Wcal_{\Gamma}$ restricts to a map $\pi|:\Vcal\rightarrow\Wcal_{\Gamma}$.
 
 Let $\mathbb{M}'$ denote the set of midpoints of the edges in $J$. Let $Y_1,\cdots,Y_{k+1}$ be the arcs that are the closures of the  components of $J\setminus\mathbb{M}'$. 
 Set $V_i=\torus\times Y_i$. Then $\Vcal$ is the quotient of $\sqcup V_i$ by the gluing maps. 
  Let $G_i$ be  the image of $H_1(V_i;\ZZ)$ in $H_1(\Vcal;\ZZ)$.  The gluing maps induce isomorphisms on rational homology. Thus $G_i\cong H_1(V_i;\ZZ)$ has finite index in $H_1(\Vcal;\ZZ)$. Hence $\cap G_i$ also has finite index. There is $n>0$ such that $H=n\cdot G_1\subset \cap G_i\subset H_1(\Vcal;\ZZ)$.
  
 Using the Hurewicz homomorphism $$\theta:\pi_1(\Vcal)\rightarrow H_1(\Vcal;\ZZ)$$ there is a finite-sheeted cover $r:\widehat{\Vcal}\rightarrow\Vcal$ with $r_*(\pi_1\widehat{\Vcal})=\theta^{-1}(H)\subset\pi_1\Vcal$.   Because $H\subset G_i$, this cover is obtained by gluing copies of finite covers
of $V_i$ together by homeomorphisms between boundary components. Thus,
 $\widehat{\Vcal}$ is the branched manifold associated to some train track $\widehat J$, labeled with elements of $\GL(2, \ZZ)$. Letting $\widehat{p}: \widehat{\Vcal} \to \widehat{J}$ be the projection map, then $\widehat J$ is a finite tree.
 
 Now, $r$ sends torus fibers to torus fibers, so that it covers a map $\widehat J\rightarrow J$. Choose a lift $\widehat\omega$ of $\widetilde\omega$
to $\widehat J$. The image of $\widehat\omega$ is an embedded arc in $\widehat J$, also called $\widehat{\omega}$. Define  $\widehat{\Vcal}'=\widehat{p}\,^{-1}(\widehat{\omega})\subset\widehat{\Vcal}$. There is a homeomorphism
$\widehat{\Omega}:\torus\times I\rightarrow\widehat{\Vcal}'$ because the labels on $\widehat{J}$ are in $\GL(2,\ZZ)$. Then $\Omega=\pi\circ r\circ\widehat{\Omega}:T\times I\rightarrow\Wcal_{\Gamma}$  is an immersion with immersion path $\omega$ and $(r\circ\widehat{\Omega})_*H_1(T\times 0;\ZZ)=n\cdot G_1$. Now, \[G_1=(q_{\widetilde{x}})_*H_1(T;\ZZ)\subset H_1(\Vcal;\ZZ)\subset H_1(\Wcal_{\widetilde\Gamma};\ZZ) \] so that composing with $\pi_*$ and using
$q_x=\pi\circ q_{\widetilde x}$ proves (3a). 

\gap 

\noindent (3b) Observe that $\hol(\widetilde y,\widetilde x)=\hol(\widetilde x,\widetilde y)^{-1}$. Thus, the hypothesis  is equivalent to the statement that
for every $1\le i\le k$ the matrix $B_i\cdots B_1$ has all integer entries.

 Suppose that $\widetilde y=\widetilde{\omega}(t)$ is a generic point and $G_i=\left(q_{\widetilde y}\right)_*H_1(T;\ZZ).$ Then
$$\begin{array}{rrcl} &G_1&\subset &G_i\\
\Leftrightarrow & \left(q_{\widetilde x}\right)_*H_1(T;\ZZ) 
& \subset & \left(q_{\widetilde y}\right)_*H_1(T;\ZZ) \\
\Leftrightarrow & \left(q_{\widetilde y}\right)_*^{-1}\left(q_{\widetilde x}\right)_*H_1(T;\ZZ) 
& \subset & H_1(T;\ZZ) \\
\Leftrightarrow & \left(\left(q_{\widetilde x}\right)_*\left(q_{\widetilde y}\right)_*^{-1}\right)\left(q_{\widetilde x}\right)_*H_1(T;\ZZ) 
& \subset & \left(q_{\widetilde x}\right)_* H_1(T;\ZZ) \\
\Leftrightarrow & \hol(\widetilde y,\widetilde x)G_1 
& \subset & G_1 .
\end{array}$$
Using the marked basis at $\widetilde x$, the group $G_1$ 
is  the subgroup $\ZZ^2$ of $H_1(\Wcal_{\widetilde\Gamma};\QQ)=\QQ^2$. The condition that $\hol(\widetilde{y}, \widetilde{x})G_1 \subset G_1$ is then equivalent to
$(\hol(\widetilde y,\widetilde x);\widetilde x,\widetilde x)$ having integer entries for all generic points $\widetilde{y} = \widetilde{\omega}(t)$. Thus, the hypothesis implies that $G_1 \subset G_i$ for all $i$ and $n=1$.
\gap

\noindent (4)  Let $\Gamma_2\subset \Gamma_1$ be   the line that contains $x_1$ and is a component of the preimage of $\omega$. Let $\Wcal_2=p_1^{-1}(\Gamma_2)\subseteq \Wcal_1$. 
Choose the basepoint $x_2 = x_1$ for the fundamental group of $\Gamma_2$. There is a covering transformation
$\sigma_1$ of $\Gamma_1$ which preserves $\Gamma_2$ and corresponds
to $\omega$. Then $\sigma_2=(\sigma_1|_{\Gamma_2})$ preserves $\Gamma_2$. There is a covering transformation
$\tau_1$ of $\Wcal_1$ that covers $\sigma_1$ so that $\tau_2=(\tau_1|_{\Wcal_2})$ preserves $\Wcal_2$.

Observe that $\Vcal \subset \Wcal_2$. Consider the subgroup $H=n\cdot G_1\subset \cap G_i\subset H_1(\Vcal;\ZZ)$ from (3). Let $\EuScript{P}:\Wcal_3\rightarrow \Wcal_2$ be the cover  corresponding to the preimage of $H$ under the homomorphism $$\pi_1(\Wcal_2)\rightarrow H_1(\Wcal_2;\ZZ)\rightarrow H_1(\Wcal_2;\QQ).$$ 
If $T\subset\Wcal_3$ is a torus fiber, then $i_*:H_1(T;\ZZ)\rightarrow H_1(\Wcal_3;\ZZ)$ is an isomorphism. Thus, the graph $\Gamma_3$ is a tree with
 edge labels in $\GL(2,\ZZ)$, and $\Wcal_3\cong T\times\Gamma_3$. 

 Now, the action induced by $\tau_2$ on $H_1(\Wcal_2;\QQ)\cong\QQ^2$ is given by the matrix $A$ using the marked basis at $x_2$. Since $A\in\GL(2,\ZZ)$, it follows that $A$ preserves 
$$\ZZ^2\cong
 \left(q_{_{x_2}}\right)_*H_1(T;\ZZ)\subset H_1(\Wcal_2;\QQ).$$ Hence $A$ also preserves $H\cong n\cdot\ZZ^2$. Thus, $\tau_2\circ \EuScript{P}:\Wcal_3\rightarrow\Wcal_2$ has a lift to  a covering
transformation $\tau_3:\Wcal_3\rightarrow\Wcal_3$ that covers $\tau_2$. Further, $\tau_3$ covers an automorphism $\sigma_3$ of $\Gamma_3$ and $\sigma_3$ preserves a line $\Gamma_4\subset\Gamma_3$ that covers
$\omega$. Define $\Wcal_4=p_3^{-1}(\Gamma_4)$ and $\sigma_4=(\sigma_3|_{\Gamma_4})$. 

Then $\tau_4=(\tau_3|_{\Wcal_4})$ preserves $\Wcal_4$.
Let $\Omega: \torus \times I \to \Wcal_\Gamma$ be the immersion constructed in (3). The lift of $\Omega$ to $\Omega_4:T\times I\rightarrow \Wcal_4$ is injective. Set $X=\Omega_4(T\times I)\cong T\times I$. Then $X$ is
a fundamental domain for the action of $\tau_4$. Define $M=\Wcal_4/\langle\ \tau_4\ \rangle$. Now $\Wcal_4$ is a subspace of a covering space of $\Wcal_{\Gamma}$ and the covering space projection restricted to $\Wcal_4$ factors through an immersion $M \to \Wcal_{\Gamma}$ with immersion path $\omega$. 
 The result now follows from (1) and (2).
\end{proof}

\section{Sol Manifolds}\label{SOLsec} 

For background on $\SOL$ manifolds, see Martelli's book \cite{Martelli} and Scott's article \cite{MR705527}.
See also \cite{MR1886669}, \cite{MR4520303}, \cite{MR4614720}, and \cite{MR2632777}.
 $\SOL$ geometry is one of the eight Thurston geometries. It is $\mathbb{R}^3$ equipped with the Riemannian metric
\[
ds^2 = e^{2z}dx^2 + e^{-2z}dy^2 + dz^2.
\] By convention, the term $\SOL$ is used both for this metric space and for its full isometry group. A $3$-manifold $M$ admits a $\SOL$ structure if $M=\SOL/\Gamma$
where $\Gamma$ is a torsion-free discrete subgroup of $\Isom(\SOL)$.

A key feature of this geometry is that the identity component of the isometry group, denoted $\mathrm{Sol}_0$, acts simply transitively on $\mathbb{R}^3$. This gives an identification of $\mathrm{Sol}_0$ with $\mathbb{R}^3$, now viewed as a Lie group acting on itself by left multiplication. Under this identification, the group law is given by
\[
(x,y,z)\cdot(x',y',z') = \big(x + e^{-z}x',\; y + e^{z}y',\; z + z'\big).
\]
 
To understand the full isometry group, one must also consider isometries that fix a point. The stabilizer of $0 \in \mathbb{R}^3$ is a finite subgroup $G_0$ of $O(3)$, isomorphic to the dihedral group, $D_4$, of order $8$. It is generated by the maps
\[
(x,y,z) \mapsto (\pm x, \pm y, z), \qquad (x,y,z) \mapsto (y,x,-z).
\]

Combining these observations, every isometry of Sol can be written uniquely as a composition of a translation in $\mathrm{Sol}_0$ followed by an element of $G_0$. In particular,
\[
\mathrm{Isom}(\mathrm{Sol}) \cong \mathrm{Sol} \rtimes D_4.
\]

 In the sequel,  the following matrices in $\GL(2,\ZZ)$ are used:
$$S=\bpmat 0 & 1\\1 & 0\epmat\quad  L=\bpmat 1 & 1\\ 0 & 1\epmat\quad \quad R=SLS=\bpmat 1 & 0\\1 & 1\epmat \quad A=\bpmat a & b\\ c & d\epmat\quad P=\bpmat 1 & 0\\ 0 & -1\epmat.$$
The notation refers to Swap, Left, Right, Arbitrary, and Parity.

A torus bundle over the circle, $M_A$, has $\SOL$ geometry when $A$ is Anosov, that is to say when $A$ has two real eigenvalues that are not $\pm1$. We refer to such manifolds as $\SOL$ bundles. If $\det A = 1$, then $A$ is Anosov if and only if $|\trace(A)|>2$. When $\det A = -1$, the condition that $A$ is Anosov is equivalent to $\trace(A) \ne 0$. 
 
 A closed 3-manifold $M$ admits a $\SOL$ geometry if and only if  $M$ is homeomorphic to $M_A$ as above, or $M$ has a 2-fold cover by such a bundle. In the latter case, $M$ is a  semi-bundle that is the union of two orientable twisted $I$-bundles over a Klein bottle. These semi-bundles are also called \emph{sapphire manifolds} in the literature. We now describe this in more detail.

Let $X=\RR^2\times[-1,1]$ and let $\Gamma\subset\Aut(X)$ be the group generated by
\[
\alpha(x,y,z) = \left(x+\tfrac{1}{2}, -y, -z\right), 
\quad 
\beta(x,y,z) = (x, y+1, z).
\]
Then $Y=K\widetilde{\times}I\cong X/\Gamma$ is the orientable twisted $I$-bundle over a Klein bottle $K$. 
Indeed, $\alpha$ and $\beta$ preserve $\mathbb{R}^2 \times 0$, and the induced action on this slice gives
\[
(\mathbb{R}^2 \times 0)/\Gamma \cong K.
\]
 The boundary of $Y$ is a torus that is identified with the orientation double-cover of $K$
by projection in the $z$-direction. It is the quotient of $\RR^2\times\{\pm1\}$ by $\Gamma$.

Let $\Gamma'$ be the subgroup of elements of $\Gamma$ that preserve the orientation of $\RR^2\times 0$. Then $\Gamma'$ has index $2$ in $\Gamma$, and  $\Gamma'\cong\ZZ^2$ is generated by $\alpha^2, \beta$. This defines a double cover $\pi_Y:\widetilde{Y}\rightarrow Y$ with $\widetilde{Y}=X/\Gamma'\cong \torus\times I$.
 
 There is a unique non-trivial covering transformation for $\pi_Y$, given by an involution 
$\overline{\alpha}$ of $\widetilde{Y}$ that is covered by  $\alpha$. It interchanges the two boundary components of $\torus\times [-1,1]$. The restriction of $\pi_Y$ gives  the identifications  \beq{KxI}\torus\equiv \torus\times1 \equiv(\RR^2\times1)/\Gamma'\equiv \bdy Y. \eeq

The mapping class group of the Klein bottle is the dihedral group of order $4$. There are  two (canonical) isotopy classes of unoriented simple closed curves on a Klein bottle that are preserved by this group, corresponding to $\alpha$ and $\beta$. This gives a canonical basis of $H_1(\bdy Y)$ described below.

Using the natural identification of $\Gamma'$ and $H_1(\partial Y;\ZZ)$, it follows that 
$(\alpha^2,\beta)$ is a basis of $H_1(\partial Y;\ZZ)\cong \ZZ^2$. The action of ${\overline{\alpha}}_*$ on $H_1(\widetilde{Y})$ is given, in this basis, by the parity matrix
$P$. The cyclic subgroups
generated by $\alpha^2$ and $\beta$ are the unique cyclic subgroups preserved by this action. Therefore, the unoriented isotopy classes of these elements are uniquely
determined by $K$. This gives four {\em canonical bases}  $(\pm\alpha^2,\pm\beta)$  of $H_1(\bdy Y)$, which lift to bases of $H_1(\torus \times -1)$ and $H_1(\torus \times 1)$. Using these lifted bases, the matrix of $\overline \alpha_*$ is $P$.

Fixing a canonical basis, a mapping class of $\partial Y \equiv \torus$ defines a matrix $A \in \GL(2,\ZZ)$ and vice versa. Notice that the diagonal matrices in $\GL(2,\ZZ)$ correspond  to the mapping classes of $\partial Y$ that interchange canonical bases. These mapping classes of $\partial Y$ extend to $Y$.

Let $N_A$ be the result of gluing two copies of $Y$, denoted by $Y_1$ and $Y_2$, along their boundary tori via a mapping class $\phi$ that is represented by $A \in \GL(2,\ZZ)$ with respect to the fixed canonical bases for $\partial Y_1$ and $\partial Y_2$. The resulting manifold $N_A$ is a torus semi-bundle with gluing map $A$. The 2-fold cover $\torus \times I \to K \wtimes I$ extends to a 2-fold cover $M_{A^{-1}PAP} \to N_A$, which is referred to as the \emph{torus bundle double cover} of the semi-bundle.

Swapping the roles of $Y_1, Y_2$ shows that $N_A \cong N_{A^{-1}}$. Additionally, precomposing the gluing map $\phi$ with automorphisms of $Y_1$ and $Y_2$ that restrict to diagonal matrices on $\partial Y_1, \partial Y_2$, yields a manifold homeomorphic to $N_A$. 
It follows that $N_A \cong N_{B}$ if $B = DA^{\pm 1}E$ where $D,E \in \GL(2,\ZZ)$ are diagonal. 
In fact, Morimoto showed that the converse is also true \cite[Theorem 1]{MR811801} giving the classification of all torus semi-bundles. Also see \cite[Theorem 2.8]{Hatcher} and \cite[Theorem 2.4]{MR2666128}. In the particular case of $\SOL$ semi-bundles, the classification is given by \cite[Proposition 1.5]{MR2666128}. These results can be summarized as:

\begin{proposition}\label{solsemibundle} Two torus semi-bundles $N_A$ and $N_B$ are homeomorphic if and only if $B=DA^{\pm1}E$, where $D, E \in \GL(2,\ZZ)$ are diagonal. If $A\in\GL(2,\ZZ)$ then the torus semi-bundle $N_A$ admits a $\SOL$ structure if and only if every entry of $A$ is non-zero.
\end{proposition}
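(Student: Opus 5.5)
The first assertion is the classification of Morimoto \cite[Theorem 1]{MR811801}, and the paper presents the proposition as a summary of known results. The plan is therefore to derive the easy direction directly and cite Morimoto, or \cite[Theorem 2.8]{Hatcher}, for the converse. For the second assertion I would argue directly through the torus bundle double cover.

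\emph{Homeomorphism classification.} The direction ``$B=DA^{\pm1}E$ implies $N_A\cong N_B$'' was explained above. Swapping $Y_1,Y_2$ replaces $A$ by $A^{-1}$. Diagonal matrices are the mapping classes of $\bdy Y$ that extend over $Y$, so pre- and post-composing with them does not change the homeomorphism type. For the converse I would sketch why it holds. When $N_A$ is not Seifert fibered, the torus $\bdy Y_1=\bdy Y_2$ is the unique incompressible torus up to isotopy (JSJ uniqueness). Any homeomorphism $N_A\to N_B$ can therefore be isotoped to preserve it, and so it carries $\{Y_1,Y_2\}$ to $\{Y_1,Y_2\}$, possibly swapping them. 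Its restriction to each $Y_i$ preserves the canonical cyclic subgroups generated by $\alpha^2$ and $\beta$, since these are the unique ones invariant under $P$. Hence the restriction acts diagonally in canonical bases, which gives $B=DA^{\pm1}E$. The Seifert fibered cases are handled by checking them directly, and this is exactly where the cited references do the work. I would defer to them rather than reprove them.

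\emph{SOL criterion.} $N_A$ is $\SOL$ if and only if its torus bundle double cover $M_{A^{-1}PAP}$ is $\SOL$. This holds because geometry passes to finite covers, and $N_A$, being a union of two twisted $I$-bundles, is a closed aspherical 3-manifold whose geometry is determined by that cover. It therefore suffices to decide when $C=A^{-1}PAP$ is Anosov. Here $\det C=1$, so the condition is $|\tr C|>2$. Write $A=\bpmat a&b\\c&d\epmat$ with $\det A=\epsilon=\pm1$. A direct computation gives $PAP=\bpmat a&-b\\-c&d\epmat$ and $A^{-1}=\epsilon\bpmat d&-b\\-c&a\epmat$. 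Hence
$$\tr(C)=\epsilon\,(ad+bc+bc+ad)=2\epsilon(ad+bc).$$
Since $ad-bc=\epsilon$, we get $ad+bc=\epsilon+2bc$, so $\tr C=2+4\epsilon bc$.

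\emph{Case analysis.} If $bc=0$, then $\tr C=2$ and $C$ is not Anosov. In that case $ad=\epsilon$, so $a$ and $d$ are nonzero. If $bc\neq 0$, write $\tr C=2+4\epsilon bc$. For $\epsilon=1$ we have $ad=1+bc$; then $bc\ge1$ gives trace at least $6$, while $bc\le -1$ gives trace at most $-2$, with equality exactly when $bc=-1$, i.e.\ when $ad=0$. For $\epsilon=-1$ we have $ad=bc-1$ and $\tr C=2-4bc$; then $bc\le-1$ gives trace at least $6$, while $bc\ge1$ gives trace at most $-2$, with equality exactly when $bc=1$, i.e.\ when $ad=0$. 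So $|\tr C|>2$ holds if and only if $bc\ne0$ and $ad\ne0$, that is, if and only if every entry of $A$ is nonzero.

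The main obstacle is the converse of the classification, in particular handling the exceptional non-$\SOL$ cases where the torus is not canonical. I will rely on the cited references there and keep the self-contained part to the trace computation.
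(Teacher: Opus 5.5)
Your proposal is correct. For the homeomorphism classification you do what the paper does: explain the easy direction ($N_A\cong N_{A^{-1}}$ by swapping $Y_1,Y_2$, and diagonal matrices extend over $K\wtimes I$) and cite Morimoto \cite{MR811801} for the converse. Your JSJ sketch is an addition. If you keep it, the claim that the restriction to $\bdy Y_i$ commutes with $P$ needs a line of justification. One way: $\pi_1\bdy Y=\langle\alpha^2,\beta\rangle$ is the unique index-two $\ZZ^2$ subgroup of $\pi_1 K$. Any isomorphism of $\pi_1(K\wtimes I)$ therefore preserves it and intertwines the conjugation action of the nontrivial coset, which is $P$.

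For the $\SOL$ criterion your route genuinely differs. The paper gives no argument and simply quotes \cite[Proposition 1.5]{MR2666128}. You instead pass to the torus bundle double cover $M_{A^{-1}PAP}$ and compute $\tr(A^{-1}PAP)=2\epsilon(ad+bc)=2+4\epsilon bc$. Since $ad=\epsilon+bc$, this gives $|\tr|>2$ exactly when $bc\neq 0$ and $ad\neq 0$. The computation is right, and because $\det=1$ it is unaffected by the conjugation/inversion ambiguity in the monodromy of the double cover.

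Your approach gives a self-contained, elementary proof. It also identifies the remaining cases: there the trace is $\pm 2$, so $N_A$ is Euclidean or Nil. And it uses the same $P$-conjugation that later drives \Cref{thm: sol semi-bundles}.

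The cost is that it rests on two standard facts, which you should cite rather than gesture at:
(i) $M_C$ is $\SOL$ if and only if $C$ is Anosov;
(ii) a closed 3-manifold double covered by a $\SOL$ manifold is itself $\SOL$ (e.g.\ Scott \cite{MR705527}; the paper also uses this).
Asphericity of $N_A$ alone does not give (ii).
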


\noindent Using the classification, the characterization of Sol semi-bundles can be refined as follows. 

\begin{theorem}\label{funfact}
 The map defined by $\vartheta(A)= N_A$ is a bijection from $\{A\in\GL(2,\ZZ)\ :\ a \geq d\ \  \textnormal{and}\ \ b,c,d>0\}$ onto homeomorphism classes of $\SOL$ semi-bundles.
\end{theorem}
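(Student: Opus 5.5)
By \Cref{solsemibundle}, homeomorphism classes of $\SOL$ semi-bundles are in bijection with orbits of the set $\mathcal{S}=\{A\in\GL(2,\ZZ): abcd\neq 0\}$ under the group generated by $A\mapsto DAE$ ($D,E$ diagonal in $\GL(2,\ZZ)$) and $A\mapsto A^{-1}$. The plan is to show that each orbit meets the set $\mathcal{N}=\{a\ge d,\ b,c,d>0\}$ in exactly one matrix. Well-definedness of $\vartheta$ is immediate: every $A\in\mathcal N$ has all entries non-zero once we check $a\neq 0$, and in fact $a\ge d>0$. So $N_A$ is $\SOL$ by \Cref{solsemibundle}.

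\textbf{Surjectivity.} Let $D=\operatorname{diag}(\epsilon_1,\epsilon_2)$ and $E=\operatorname{diag}(\eta_1,\eta_2)$. Then $DAE$ has entries $\epsilon_1\eta_1 a,\ \epsilon_1\eta_2 b,\ \epsilon_2\eta_1 c,\ \epsilon_2\eta_2 d$. We can independently choose the signs of $b,c,d$ to be positive, since the sign maps $(\epsilon,\eta)\mapsto(\epsilon_1\eta_2,\epsilon_2\eta_1,\epsilon_2\eta_2)$ are onto $\{\pm1\}^3$. The sign of $a$ is then forced. Next use inversion: $A^{-1}=\frac{1}{\det A}\begin{pmatrix} d&-b\\-c&a\end{pmatrix}$, which swaps the roles of $a$ and $d$ in absolute value. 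After renormalizing signs so that $b,c,d>0$ again, the new $d$ equals $|a|$ of the old matrix. So we may arrange $|a|\ge d$. It remains to show $a>0$ when $b,c,d>0$ and $|a|\ge d$. If $a<0$ then $ad-bc\le -d^2-bc<-1$ unless $d=b=c=1$ and $a=-1$, giving determinant $-2$. So $|\det A|\ge 2$ in all cases with $a<0$, a contradiction. Hence $a\ge d>0$.

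\textbf{Injectivity.} This is the main point. Suppose $A,B\in\mathcal N$ with $B=DA^{\pm1}E$. Taking absolute values entrywise, $DAE$ has entries $|a|,|b|,|c|,|d|$, while $DA^{-1}E$ has entries $|d|,|b|,|c|,|a|$. Since all entries of $A$ and $B$ are positive, in the first case $B=A$ entrywise. In the second case $B$ has entries $(d,b,c,a)$; the condition $b_{11}\ge b_{22}$ forces $d\ge a$, so $a=d$ and again $B=A$. Thus the orbit meets $\mathcal N$ at most once.

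The only step needing care is surjectivity, specifically that the forced sign of $a$ comes out positive. This is the determinant estimate above, and it uses $\det=\pm1$ essentially.
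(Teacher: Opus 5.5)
Your proof is correct and follows the paper's route: reduce via \Cref{solsemibundle} to classes under $A\mapsto DA^{\pm1}E$, fix signs with diagonal matrices, use $\det A=\pm1$ to force the remaining sign positive, and use $A\mapsto A^{-1}$ (which swaps $|a|$ and $|d|$) to get $a\ge d$. The paper does the sign step first, by noting that $ad$ and $bc$ have the same sign, but it is the same idea; you also correctly supply the injectivity argument the paper leaves to the reader, and your ``unless'' clause is vacuous since $-d^2-bc\le -2$ always holds.
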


\begin{proof} 
Suppose $N_A$ is $\SOL$. Then $abcd\ne 0$, so the integers $ad$ and $bc$ are non-zero.  Moreover, $ad$ and $bc$ have the same sign since $\det A=ad-bc =\pm1$. Thus, $A$ has an even number of negative entries. After  replacing $A$ by $DAE$ for an  appropriate choice of diagonal matrices, it follows that $a,b,c,d$ are all strictly positive.  
Given that $N_A\cong N_{A^{-1}}$, $\det A = \pm1$, and 
$$\bpmat a & b\\ c & d\epmat ^{-1} =  \pm\bpmat -1 & 0\\ 0 & 1\epmat \bpmat d & b\\ c & a \epmat\bpmat -1 & 0\\ 0 & 1\epmat,$$
 we may assume $a \geq d$. It follows that $\vartheta$ is surjective. It is easy to show that $\vartheta$ is injective so we leave it to the reader. \end{proof}

\section{Regular Languages}\label{Regsec} In this section, regular languages are used to encode matrices in $\GL(2, \ZZ)$ that define $\SOL$ bundles and semi-bundles. A \emph{string} is a finite sequence of zero or more elements of a set called the \emph{alphabet}.  A \emph{regular language} is a set of strings obtained using only the following operations. 

Every finite set of strings is a regular language. If $\Rcal$ and $\Rcal'$ are regular languages then $\Rcal\Rcal'$ is the regular language consisting of a string in $\Rcal$ followed by one in $\Rcal'$. In addition, $(\Rcal|\Rcal')$ is the regular language $\Rcal\cup \Rcal'$, and $\Rcal^*$ is the regular language consisting of the concatenation of zero or more strings in $\Rcal$.

\Cref{matrix lemma} below provides a characterization of Anosov matrices, and of matrix representatives for $\SOL$ semi-bundles, using regular languages that will later be encoded by labeled train tracks. 
In particular, a regular language is given for each set of matrices.
 Define 
 $$\Pcal_0=(L|S)^* ,\qquad \Pcal= (L|S)^* LSL(L|S)^*,\qquad \Pcal_a=( L|LS)^* LS.$$  
  Regarding $L,S$ as letters of an alphabet and not as matrices, each of these sets of words is a regular language with alphabet $\{L,S\}$. The notation suggests $\Pcal$ is positive, $\Pcal_0$ is positive or zero, and $\Pcal_a$ is {\bf A}nosov. 
  
 Each of these sets determines a subset of $\GL(2,\ZZ)$ using the matrices above.
Since $S^2=I$ the set  of matrices for $\Pcal_a$ can be represented by all products of $L$ and $S$ that start with $L$ and end with $LS$, and  the set of matrices for
 $\Pcal$ consists of all products of $L$ and $S$ that contain the string $LSL$.

\begin{lemma}\label{matrix lemma} If $A\in\GL(2,\ZZ)$, then\\
(1) $\Pcal_0$ is the set of non-negative matrices.\\
(2) $\Pcal$ is the set of strictly positive matrices.\\
(3) $A$ is Anosov if and only if it is conjugate into $\pm\Pcal_a$.
\end{lemma}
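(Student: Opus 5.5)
Parts (1) and (2) will be proved by a Euclidean-algorithm descent, and part (3) by the continued-fraction (cutting-sequence) normal form for hyperbolic elements; the main obstacle will be the conjugation step in (3).

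\textbf{Part (1).} One inclusion is immediate: $L$ and $S$ are non-negative, so every product is non-negative. For the converse, take a non-negative $A\in\GL(2,\ZZ)$ and induct on the sum of its entries. If $A$ has a zero entry, then $\det A=\pm1$ forces $A$ to be $I$, $S$, $L^k$, $L^kS$, $SL^k$ or $SL^kS$. Each of these lies in $\Pcal_0$, since $R=SLS$.

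Otherwise all entries are positive, and $\det A=\pm1$ forces one row to dominate the other entrywise. The reason is that $a\ge c$ and $b<d$ would give $|ad-bc|\ge 2$ unless the entries are tiny, and those tiny cases can be checked by hand. If the first row dominates, then $L^{-1}A$ is non-negative with smaller entry sum. If the second row dominates, then $R^{-1}A=SL^{-1}SA$ is. Induction then writes $A$ as a word in $L$ and $S$.

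\textbf{Part (2).} A product of non-negative invertible matrices containing the positive factor $LSL=\bpmat 2&1\\1&1\epmat$ is positive. This holds because multiplying a positive matrix on either side by a non-negative invertible matrix (which has no zero row or column) keeps it positive. Conversely, if $A$ is positive, run the descent of (1). It peels off $L$'s and $R$'s until some zero appears, and the final step passes from a positive matrix to one with a zero entry. The explicit list of zero-entry matrices in (1) then shows that the word obtained contains $LSL$.

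\textbf{Part (3).} First, elements of $\pm\Pcal_a$ are Anosov. Such a word is a product of factors $L$ and $LS$, not all equal to $L$, ending in $LS$. Using $S^2=I$ it can be rewritten as a product of $L^{k}$ and $R^{k}$ blocks, possibly times $S$. Then either it contains $LSL$, making it positive, or it reduces to a case such as $LS=\bpmat 1&1\\1&0\epmat$, whose trace is nonzero and whose determinant is $-1$. A positive matrix of determinant $1$ has trace $a+d\ge 3$, because $ad=1+bc\ge 2$. A positive matrix of determinant $-1$ has nonzero trace. So these are Anosov, and $-A$ is Anosov iff $A$ is.

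Conversely, let $A$ be Anosov. Its eigenvectors have irrational slopes, and after conjugating by an element of $\GL(2,\ZZ)$ and replacing $A$ by $-A$ if needed, $A$ maps the positive quadrant strictly into itself. Concretely, choose a basis in which the expanding eigenline lies in the open first quadrant and the contracting one in the second/fourth; this uses the Farey tessellation, or equivalently reduction theory of the quadratic form attached to $A$. Then $A$ is positive, so by (2) it is a word in $L,S$ containing $LSL$.

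It remains to cyclically conjugate this word into the form $(L|LS)^*LS$. Write $A=L^{a_1}R^{a_2}\cdots$ or similar, with $S$ possibly appended. Conjugate by a suitable power of $S$ so that the word starts with $L$, and use $R^k=SL^kS$ to rewrite it purely as a product of the factors $L$ and $LS$. A cyclic rotation, which is a conjugation, then makes it end in $LS$. The parity of the number of $S$'s is governed by $\det A$; if an unwanted extra $S$ remains, absorb it by conjugating by $S$.

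The main obstacle is exactly this conjugation step, in particular handling the determinant $-1$ case and guaranteeing that the word contains an $S$, i.e. is not a pure power of $L$. The latter follows since $L^k$ is parabolic.
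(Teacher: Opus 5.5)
Parts (1), (2) and the forward direction of (3) are fine; they are variants of the paper's arguments. The gap is in the converse of (3). You claim that after a $\GL(2,\ZZ)$-conjugation and a sign change every Anosov $A$ becomes strictly positive, so that (2) applies. This is false.

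Take $LS=\begin{pmatrix}1&1\\1&0\end{pmatrix}$. It is Anosov, with determinant $-1$ and trace $1$. A strictly positive integer matrix has trace at least $2$, and conjugation preserves trace, so no conjugate of $\pm LS$ is strictly positive.

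Your concrete recipe also fails when $\det A=-1$, even in conjugacy classes that do contain positive matrices. Consider $SL^2=\begin{pmatrix}0&1\\1&2\end{pmatrix}$, which is conjugate to $LSL$. Its expanding eigenvector $(1,1+\sqrt2)$ lies in the open first quadrant and its contracting eigenvector $(1,1-\sqrt2)$ lies in the open fourth quadrant, yet it has a zero entry. For $\det A=1$ the eigenline placement does force strict positivity. So the failure sits exactly in the $\det A=-1$ case you flagged as the obstacle and then left unresolved, and the appeal to (2) breaks down there.

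What the eigenline placement actually gives is non-negativity, and you need to prove this rather than just cite the Farey tessellation. One way: $A$ carries the Farey edge $(0,\infty)$, which crosses the axis of $A$, to another Farey edge crossing the axis nearer the attracting fixed point. Since Farey edges do not cross, $\pm A$ maps the closed first quadrant into itself.

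You must then treat the zero-entry case separately. A non-negative Anosov matrix with a zero entry has that zero on the diagonal, so it is $L^kS\in\Pcal_a$ or $SL^k=S(L^kS)S$.

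This is exactly how the paper organizes (3). Its Claim is only that an Anosov $A$ is conjugate into $\pm\Pcal_0$. It proves this by an elementary induction on $\|A\|$, the sum of the absolute values of the entries, using explicit conjugations by $P$, $S$ and $R$. It then splits into the strictly positive case, where a word containing $LSL$ is cyclically rotated into $(L|LS)^*LS$, and the zero-entry case.

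With that repair, your reduction-theory route is a legitimate, more conceptual alternative to the paper's hands-on induction. A minor slip: $LSL=\begin{pmatrix}1&2\\1&1\end{pmatrix}$, not $\begin{pmatrix}2&1\\1&1\end{pmatrix}$, which is harmless here.
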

\begin{proof}   (1) It is clear that every matrix in $\Pcal_0$ has non-negative entries.  
By the Euclidean algorithm, $L$ and $R$ generate the free monoid $X_+\subset \SL(2,\ZZ)$ that consists of all matrices with non-negative entries and determinant 1 (see, for instance, Remark 4.7 in \cite{MR4699874}). Since $L,S\in\Pcal_0$ and $R=SLS$,
it follows that $X_+\subset \Pcal_0$. Let $X_-\subset\GL(2,\ZZ)$ be the set of non-negative matrices 
with determinant $-1$. Since $\det S=-1$ then $S X_-\subset X_+$. Also $S^2=I$ so $X_-\subset SX_+\subset \Pcal_0$.  
Hence $X_+\cup X_-\subset \Pcal_0$.
This proves (1).

\gap 

 \noindent  (2) Every  word in $\Pcal$ contains $LSL$, and this
 is a strictly positive matrix. Since $L$ and $S$ are non-negative, it follows that all members of $\Pcal$ are strictly 
positive. For the reverse containment, note that strictly positive matrices are a subset of $\Pcal_0$. Given that $S^2=I$, a word is {\em reduced} if it does not contain $S^2$.   Every reduced word in $S$ and $L$ that does not contain $ L  S L $ is one of $S, L ^n,S L ^n, L ^nS$ or $S L ^nS$ for $n\ge1$. These matrices all have a zero entry. This proves (2).

\gap

\noindent  (3)  
 The property that a  matrix $A$ is Anosov is preserved by conjugation. Additionally, $A$ is Anosov if and only if $\pm A^n$ is Anosov for  $n\ne 0$.
If $A\in\Pcal_a$ is not strictly positive, then $A$ is of the form $$L^nS = \bpmat n & 1 \\ 1 & 0 \epmat,\qquad n>0$$ and $A^2$ is strictly positive.
Therefore, if $A\in\Pcal_a$, then $A^4$ is a strictly positive matrix with all entries greater than $1$. It follows that $\tr (A^4)>2$, which implies that $A^4$ is Anosov, and hence $A$ is Anosov.
Thus, every element of $\pm\Pcal_a$ is Anosov.

The idea for the converse is to first reduce to the case that $A$ is non-negative. 
 
\gap

\noindent \textbf{Claim:} If $A$ is Anosov, then $A$ is conjugate into $\pm \Pcal_0$. 

\gap

\noindent \textbf{Proof of Claim:} 
  If some entry of $A$ is zero, then since $A$ is Anosov, the zero entry is on the diagonal. By conjugating by $S$ as needed, we may assume that the zero entry is the $(2,2)$ entry.  After replacing $A$ by $-A$ if needed, we may assume that either $A$ is in $\Pcal_0$, or $A$ is one of
$$  \bpmat n &  -1\\  -1 & 0\epmat, \quad \bpmat n & - 1\\  1 & 0\epmat,\quad \bpmat n & 1 \\ -1 & 0\epmat,\qquad \qquad \textnormal{ for } n>0.$$
 Conjugating the first of these gives
$$\bpmat 0 & -1\\1 & 0\epmat \bpmat n & -1\\ -1 & 0\epmat\bpmat 0 & 1\\-1 & 0\epmat =\bpmat 0 & 1\\1 & n\epmat\in\Pcal_0.$$
In the remaining two cases, $\det A=1$, and $A$ is Anosov, so $n\ge 3$. Conjugating the first of these two  gives
$$\bpmat 1 & -1\\1 & 0\epmat \bpmat n & -1\\ 1 & 0\epmat\bpmat 1 & -1\\1 & 0\epmat^{-1} =  \bpmat 1 & n-2\\1 & n-1\epmat\in\Pcal_0.$$
The last matrix is conjugated into $\Pcal_0$ similarly. 
 Thus, if some entry is $0$, then $A$ is conjugate into $\pm\Pcal_0$.

Now, suppose that no entry of $A$ is zero. Since $\det A=ad-bc =\pm1$, $abcd\ne 0$, and $A$ is integral, it follows that $A$ has an even number of negative entries. Therefore, $\pm A \in \pm \Pcal_0$ or $A$ has two positive entries and two negative entries. The result when the negative entries are either both on or both off the diagonal follows from conjugation by $P$.
This leaves the case where the two negative entries are either in the same row or in the same column.

Consider the case
$$A=\bpmat a & b\\ -c & -d\epmat$$
with all $a,b,c,d>0$. Additionally, assume for now that $a \geq b$. Then
$$\bpmat 1 & 0\\1 & 1\epmat \bpmat a & b\\ -c & -d\epmat \bpmat 1 & 0\\-1 & 1\epmat=\bpmat a-b & b\\ a-c+d-b & b-d\epmat=F$$ 
Since $F$ is Anosov, if some entry of $F$ is zero or if every entry is strictly positive, the claim follows by the argument above. 
Thus, we may assume $F$ has no zero entries and an even number of negative entries. Since $a-b\ge0$, it follows that $a-b>0$. Additionally $b >0$ by assumption. Thus, $F$ has two negative entries, so
$b-d<0$ and
$a-c+d-b<0$. Then
$$ \|F\|=(a-b)+b+(c-a+b-d)+(d-b)=c<a+b+c+d=\|A\|.$$ Induction
on $\|A\|$ proves the claim in this case.

The other cases, when the negative entries of $A$ are either in the first row or else in the same column and where $a \geq b$,
are proved similarly. If $a<b$, then $c\le d$ because $\det A=\pm1$. In this case, conjugating $A$ by $S$ reduces to one of the previous cases where $a\ge b$. For example, if $A$ is as above, then 
$$ SAS^{-1} = S\bpmat a & b \\ -c & -d\epmat S = \bpmat -d & -c\\ b & a\epmat.$$

\noindent This proves the claim that $A$ is conjugate into $\pm \Pcal_0$. \hfill $\blacksquare$

\gap

Suppose $A$ is Anosov. By the remarks at the start of the proof and along with the claim, we may assume that $A\in\Pcal_0$. If $A$ is strictly positive, then by (2) it is represented by a word in $(L|S)^*$ that contains the subword $LSL$. Thus, it is conjugate to a positive word that starts with $L$ and ends with $LS$,
and therefore conjugate into $\Pcal_a$. Otherwise,  if $A$ is not strictly positive, then some entry is zero. Since $A$ is Anosov, the zero entry of $A$ is on the diagonal and $A$ is one of
 $L^nS$ or $SL^n$ for some $n>0$. But $SL^n$ is conjugate to $L^nS$, and $L^nS\in\Pcal_a$. Hence, every Anosov matrix is conjugate into $\pm\Pcal_a$.
\end{proof}

The theorem below follows immediately by \Cref{matrix lemma}
 and the surjectivity of $\vartheta$ from \Cref{funfact}.
 
\begin{theorem}\label{solcor} Let $[M]$ denote the homeomorphism class of the manifold $M$. \\
(1) $\{[M_{A}]: A\in\pm\Pcal_a\}$ is the set of all  homeomorphism classes of  $\SOL$ bundles.\\
(2)  $\{[N_{A}]: A\in \Pcal\}$ is the set of all homeomorphism classes of $\SOL$ semi-bundles. 
\end{theorem}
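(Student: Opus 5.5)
Both parts follow directly from \Cref{matrix lemma}, \Cref{virtequiv}, \Cref{solsemibundle} and \Cref{funfact}, so the plan is to prove each inclusion of sets of homeomorphism classes separately.

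For (1), start with $A\in\pm\Pcal_a$. By \Cref{matrix lemma}(3) $A$ is Anosov, hence $M_A$ is a $\SOL$ bundle. Conversely, let $M$ be a $\SOL$ bundle. Then $M\cong M_B$ for some Anosov $B\in\GL(2,\ZZ)$. By \Cref{matrix lemma}(3) there is $C\in\GL(2,\ZZ)$ with $CBC^{-1}=A\in\pm\Pcal_a$. Since $[A]=[B]\in[\GL(2,\ZZ)]$, \Cref{virtequiv} shows that $M_A$ and $M_B$ are equivalent bundles, hence homeomorphic. So $[M]=[M_A]$.

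For (2), start with $A\in\Pcal$. By \Cref{matrix lemma}(2) $A$ is strictly positive, so every entry is nonzero, and \Cref{solsemibundle} shows that $N_A$ is $\SOL$. Conversely, by the surjectivity of $\vartheta$ in \Cref{funfact}, every $\SOL$ semi-bundle is homeomorphic to $N_A$ for some $A\in\GL(2,\ZZ)$ with $b,c,d>0$ and $a\ge d$. Then $a\ge d\ge 1$, so $A$ is strictly positive. By \Cref{matrix lemma}(2), $A$ is represented by a word in $\Pcal$.

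The only point needing care is the meaning of $A\in\Pcal$ in (2): it means the matrix represented by some word of the language, and \Cref{matrix lemma}(2) supplies exactly this. No step is a genuine obstacle; the argument is bookkeeping.
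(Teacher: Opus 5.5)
Your proof is correct and matches the paper, which simply says the theorem follows immediately from \Cref{matrix lemma} and the surjectivity of $\vartheta$ in \Cref{funfact}; you have just spelled out the routine uses of \Cref{virtequiv} and \Cref{solsemibundle}. One point to note: \Cref{matrix lemma}(3) as stated does not say in which group the conjugation happens, but its proof uses only conjugators in $\GL(2,\ZZ)$, and that is what gives you equivalence, hence homeomorphism, rather than only virtual equivalence via \Cref{virtequiv}.
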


\section{Main Theorem}\label{MainThmsec} In this section it is important to keep in mind that if $\omega$ is an immersed path in a labeled train track, then by \Cref{lem: immersions from pathsa} the holonomy, denoted by $A(\omega)$, is the product of the inverses of the edge labels in the order in which the edges are traversed (see \Cref{def: pathprod}).

  The train tracks $\EuG$ and $\overline\Gamma$ below are \emph{orientable} in the sense that every edge has an orientation (different from the arrows depicting labels) so that every chart is orientation preserving. It follows that an immersed loop in such train tracks will always cross a particular edge in the same direction.

\begin{theorem}\label{thm: sol bundles}
   The set of $\SOL$ bundles is LCD. 
\end{theorem}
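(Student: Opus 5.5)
By \Cref{equiv thm}, it suffices to show the family is BM. So the plan is to build a labeled train track $\EuG$ whose associated branched manifold $\Wcal_{\EuG}$ has exactly the $\SOL$ bundles as the closed manifolds that properly immerse into it.

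\textbf{Construction of $\EuG$.} I would encode the regular language $\pm\Pcal_a=\pm(L|LS)^*LS$ of \Cref{matrix lemma}(3) by a finite automaton. I would then realize this automaton as an orientable train track, so that immersed loops correspond to cyclic words, i.e.\ conjugacy classes. The track would have a central cycle with two branches at a switch. One branch carries an edge labeled $L^{-1}$. The other carries consecutive edges labeled $L^{-1}$ and $S^{-1}=S$. In addition there is an optional edge labeled $-I$.

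Because holonomy is the product of the inverses of the labels (\Cref{lem: immersions from pathsa}), using labels that are inverses of the letters makes $A(\omega)$ equal to the word read along the loop. Since all labels lie in $\GL(2,\ZZ)$, the holonomy along any path is integral. Hence by \Cref{thm: immersions from paths}(4), every immersed loop $\omega$ with $\mon(\omega)=A$ yields an immersion $M_A\to\Wcal_{\EuG}$.

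Immersed loops read exactly the cyclic words in $\{L,LS\}$, times an optional $\pm$ sign. Each such word is cyclically conjugate to a word in $\Pcal_a$, or is a power of $L$. So for surjectivity, \Cref{solcor}(1) gives that every $\SOL$ bundle $M_A$ with $A\in\pm\Pcal_a$ immerses.

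\textbf{Excluding unwanted loops.} This is the main obstacle. The loop reading only the letter $L$ has unipotent monodromy and would give the non-$\SOL$ bundles $M_{L^n}$. I would remove these by forcing the $S$-edge to be traversed. The approach is to cut the cycle open so that a loop must pass through the $LS$ branch. Concretely, I would make the track such that the only way to return to the basepoint uses the $LS$ edge; this mirrors the word structure $(L|LS)^*LS$ ending in $LS$. Cyclically, every immersed loop then contains at least one $LS$, so its word is cyclically in $\Pcal_a$.

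A second issue comes from \Cref{thm: immersions from paths}(1). Any $M$ immersing with loop $\omega$ is only virtually equivalent to $M_{\mon(\omega)}$, i.e.\ conjugate over $\QQ$, not over $\ZZ$. This is harmless here. Anosov-ness is a $\GL(2,\QQ)$-conjugacy invariant, so every torus bundle $M_B$ immersing into $\Wcal_{\EuG}$ has $B$ rationally conjugate to an element of $\pm\Pcal_a$, hence Anosov, hence $\SOL$.

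\textbf{Conclusion.} By \Cref{prop:train track torus bundle}, any closed $M$ immersing into $\Wcal_{\EuG}$ is a torus bundle with an immersion loop. The previous step shows its monodromy is Anosov. Conversely, every $\SOL$ bundle immerses by the construction step. Thus the family of $\SOL$ bundles is BM, hence LCD by \Cref{equiv thm}.

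The delicate step is the train-track design. It must ensure that every cyclic word read by an immersed loop contains an $S$ and no $S^2$ cancellation. It also must ensure that the orientation condition forces every crossing direction to be positive, so that inverses of labels never appear.
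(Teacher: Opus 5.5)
\textbf{There is a genuine gap.} It is exactly the step you flag as delicate, and the track you describe cannot carry it out.

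Immersion loops are arbitrary immersed closed curves in the track, not loops through a chosen basepoint. So arranging that ``the only way to return to the basepoint'' uses the $LS$ edge constrains nothing.

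Concretely, your surjectivity argument needs an immersed loop reading $L^{n-1}\cdot LS=L^nS\in\Pcal_a$ for every $n\ge 1$. Once $n$ exceeds twice the number of edges of your finite track, some edge contributing an $L$ is crossed twice in the same direction during the run $L^n$. Take the part of the loop from the first such crossing up to, but not including, the second. It closes up to an immersed loop, because smoothness at the junction is inherited from the original loop. This sub-loop reads only $L$'s, $\pm I$ and identity labels, so its monodromy is $\pm L^k$ with $k\ge 1$. Since all your labels lie in $\GL(2,\ZZ)$, \Cref{thm: immersions from paths}(4) then gives an immersion of the non-$\SOL$ bundle $M_{\pm L^k}$.

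So a finite track with unimodular labels that reads the words of $\Pcal_a$ literally cannot enforce ``every cyclic word contains an $S$'': pumping destroys that property.

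\textbf{The missing idea is the paper's determinant trick.} In $\EuG$ the $L$-loop is labeled $2L$ rather than $L$, and a second loop labeled $2I$ is added. The orientation of the track forces a loop that goes around $2L$ positively to go around $2I$ negatively. The track is also arranged so that a loop using both must pass through the point $y$, i.e.\ across the $S$ and $L$ edges.

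Suppose $\Omega:M\to\Wcal_{\EuG}$ has immersion loop $\omega$. By \Cref{thm: immersions from paths}(1), $A(\omega)$ is $\GL(2,\QQ)$-conjugate to the integral monodromy of $M$, so $\det A(\omega)=\pm 1$. A loop that only circles the $2L$ loop has determinant $4^{\pm k}$ and is therefore not an immersion loop of any closed manifold. A balanced loop must pass through $y$, so its word contains an $S$. Its powers of $2$ cancel, and $\pm A(\omega)^{-1}$ is an integral word beginning with $L$ and ending with $LS$. Hence it lies in $\Pcal_a$, and \Cref{thm: immersions from paths}(4) applies for the converse direction.

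The unwanted loops are thus excluded algebraically, through the determinant, rather than by the shape of the graph. As the paper remarks after the proof, the set of admissible loop words is context-free rather than regular. That is why a design with only unimodular labels that reads words of $\Pcal_a$ literally cannot work.

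Your remaining steps are fine: the reduction to BM via \Cref{equiv thm}, the use of \Cref{prop:train track torus bundle}, and the observation that being Anosov is a $\GL(2,\QQ)$-conjugacy invariant.
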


\begin{proof}
\Cref{sol bundles} shows a labeled train track $\EuG$. Let $\Wcal_{\EuG}$ be the branched manifold associated to $\EuG$. By \Cref{equiv thm}, it suffices to show that a closed 3-manifold $M$ admits a proper PL immersion into $\Wcal_\EuG$ if and only if $M$ is a $\SOL$ bundle. 

Suppose that $M$ is a closed $3$-manifold and
$\Omega : M \to \mathcal{W}_{\EuG}$
is a proper PL immersion. By \Cref{prop:train track torus bundle}, the manifold $M$ is a torus bundle over $\mathbb{S}^1$ and there is an immersion loop  $\omega:\SS^1\rightarrow \EuG$ for $\Omega$. Fix $x \in \EuG$ so that $\omega$ is based at $x$. By \Cref{lem: immersions from pathsa}, $(\mon(\omega); \widetilde{x}, \widetilde{x}) = A(\omega) = A$ for some $A \in \GL(2, \QQ)$.  From \Cref{thm: immersions from paths}, it follows that the monodromy of $M$ is conjugate to $A$ in $\GL(2, \QQ)$. Hence $\det (A)=\pm1$.
\begin{figure}[h!]
\vspace{0.25 cm}
\begin{overpic}[scale=0.7]{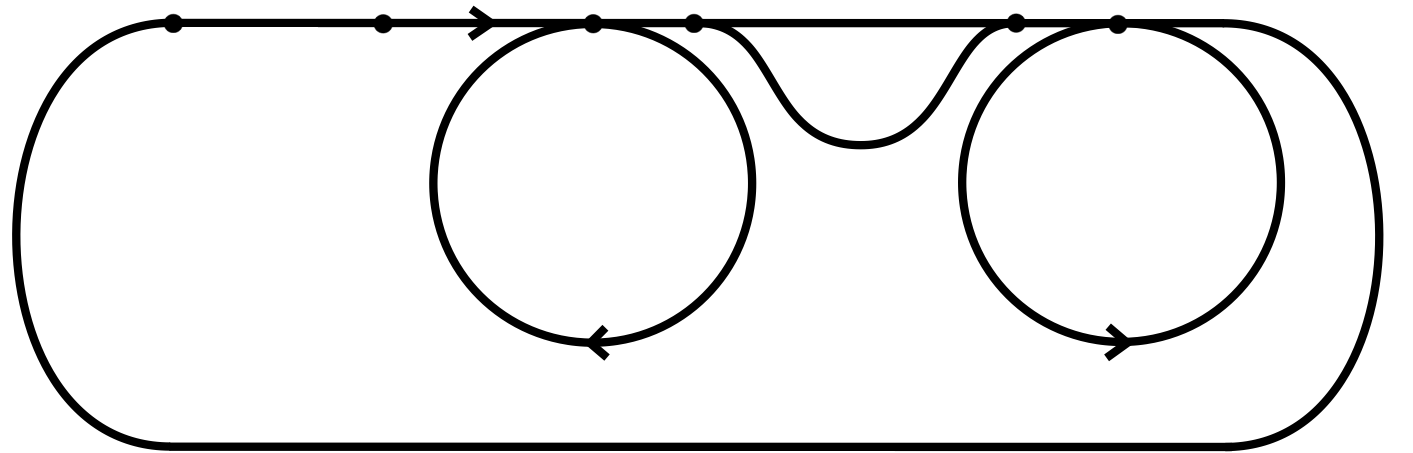}
    \put(56,17){$-I$}
    \put(33,32.5){$L$}
    \put(19,32.5){$S$}
     \put(40,10){$2L$}
     \put(77,10){$2I$}
     \put(11.5,32.5){\color{blue}$y$}
\end{overpic}
 \caption{$\EuG$} 
 \label{sol bundles}
\end{figure}
The orientation on the train track $\EuG$ implies that if $\omega$ traverses the $2L$ loop positively (resp. negatively) then it traverses the $2I$ loop negatively (resp. positively). 
Since $\det A=\pm1$ and all other edge labels have determinant $\pm 1$, it follows that $\omega$ traverses the loop labeled by $2L$ and the loop labeled by $2I$ the same number of times. This means that $\omega$ must contain $y$. 

Thus, we can choose $\omega$ to be a loop based at $y$ and oriented so that it traverses the edges labeled $S,L$ in that order. It follows that, as a word, $A$ starts with $S^{-1}L^{-1}$. Then, the remaining edges contribute products of $(2L)^{-1},2I$, and $-I$ to $A(\omega)$.

As a result, $A^{-1}$ or $-A^{-1}$ is a reduced word in $L$ and $S$ that begins with $L$ and ends with $LS$. As noted above, any such reduced word is an element of $\Pcal_a$, so that $A^{-1} \in \pm \Pcal_a$. By \Cref{thm: immersions from paths}(4), the bundle $M_{A}$ immerses into $\Wcal_{\EuG}$ with the same immersion path $\omega$. Therefore, $M$ is virtually equivalent to $M_{A}$ by \Cref{thm: immersions from paths}(1). Moreover, \Cref{solcor} implies $M_{A^{-1}}$ is a Sol bundle, and since $M_A \cong M_{A^{-1}}$, the torus bundle $M_{A}$ is Sol as well. Since the property of being Anosov is invariant under $\GL(2, \QQ)$ conjugation, $M$ is a $\SOL$ bundle.

On the other hand, suppose $M$ is a $\SOL$ bundle. By \Cref{solcor}, $M \in [M_A]$ where $A \in \pm \Pcal_a$. Again using the fact that $S^2 =I$, the matrix $A$ can be written in the form  $$\pm \, L^{n_k}S \cdots L^{n_1}S,$$ where $n_i \in \NN$ for all $i$. By induction, there is an immersion $\omega: \SS^1 \to \EuG$ based at $y$ with monodromy $A^{-1}$. Then, \Cref{thm: immersions from paths} gives the desired immersion of $M_{A^{-1}} \cong M_A$ into $\Wcal_\EuG$. \end{proof}

The loop in $\EuG$ is labeled $2L$ rather than $L$ since, otherwise, the non-$\SOL$ manifold $M_L$ would immerse into $\Wcal_{\EuG}$. This trick to eliminate unwanted immersions will be used frequently in subsequent papers.

As an aside, note that the proof of \Cref{thm: sol bundles} uses immersion paths that traverse the $2L$ and $2I$ loops of the train track $\EuG$ the same number of times. These immersion paths correspond to the subset of the regular language $\overline{\Pcal_a} = (2I)^*(2L|LS)^*LS$ consisting of those strings that map to zero under the map $F:\overline{\Pcal_a}\rightarrow\ZZ$ defined by $F(2L)=1, F(2I)=-1, F(LS) = 0$ and $F(w_1w_2)=F(w_1)+F(w_2)$. Subsets of this form are well-studied in the theory of regular languages, and $F_0 =\{w \in \overline{\Pcal_a} \  | \ F(w) = 0\}$ is an example of a \emph{context-free language}, which are languages recognized by a \emph{pushdown automaton}, see \cite[Theorem 2.20]{sipser2013introduction}.

\subsection{Branched Semi-bundles} A torus semi-bundle is the quotient of a torus bundle by a free involution that sends torus fibers to torus fibers and so that the quotient contains two Klein bottles. This idea can be extended to branched torus bundles as follows.

\Cref{solbrmfdcover} shows a labeled train track $\overline{\Gamma}$ that determines a branched manifold $\Wcal_{\overline{\Gamma}}$ with projection $\overline{p}:\Wcal_{\overline{\Gamma}}\rightarrow\overline{\Gamma}$ which has torus fibers.
There is a free involution $\sigma$ of $\Wcal_{\overline{\Gamma}}$ that sends torus fibers to torus fibers and the quotient is a universal branched manifold $\Wcal_{\SOL}$ for $\SOL$ manifolds, as we will now show.

There is a decomposition $\overline{\Gamma}=J_1\cup J_2\cup J_3\cup J_4$ as the union of two arcs $J_1=x'x$ and $J_3=ww'$ both labeled by $P$, and two isomorphic labeled train tracks $J_2\cong J_4$. Let $\Wcal_i=\overline{p}\,^{-1}(J_i)$. Then $\Wcal_{\overline{\Gamma}}=\Wcal_1\cup\Wcal_2\cup\Wcal_3\cup\Wcal_4$ and the branched manifolds $\Wcal_i$ have disjoint interiors. There are identifications $\Wcal_1\cong \torus\times I\cong\Wcal_3$ and $\Wcal_2\cong\Wcal_4$, where the branched structure on each $\Wcal_i$ is specified by the Gluing Theorem \cite[Theorem 5.13]{CPT}.

\begin{figure}
\raggedleft
 \vspace{0.25 cm}
\begin{overpic}[scale=0.44]{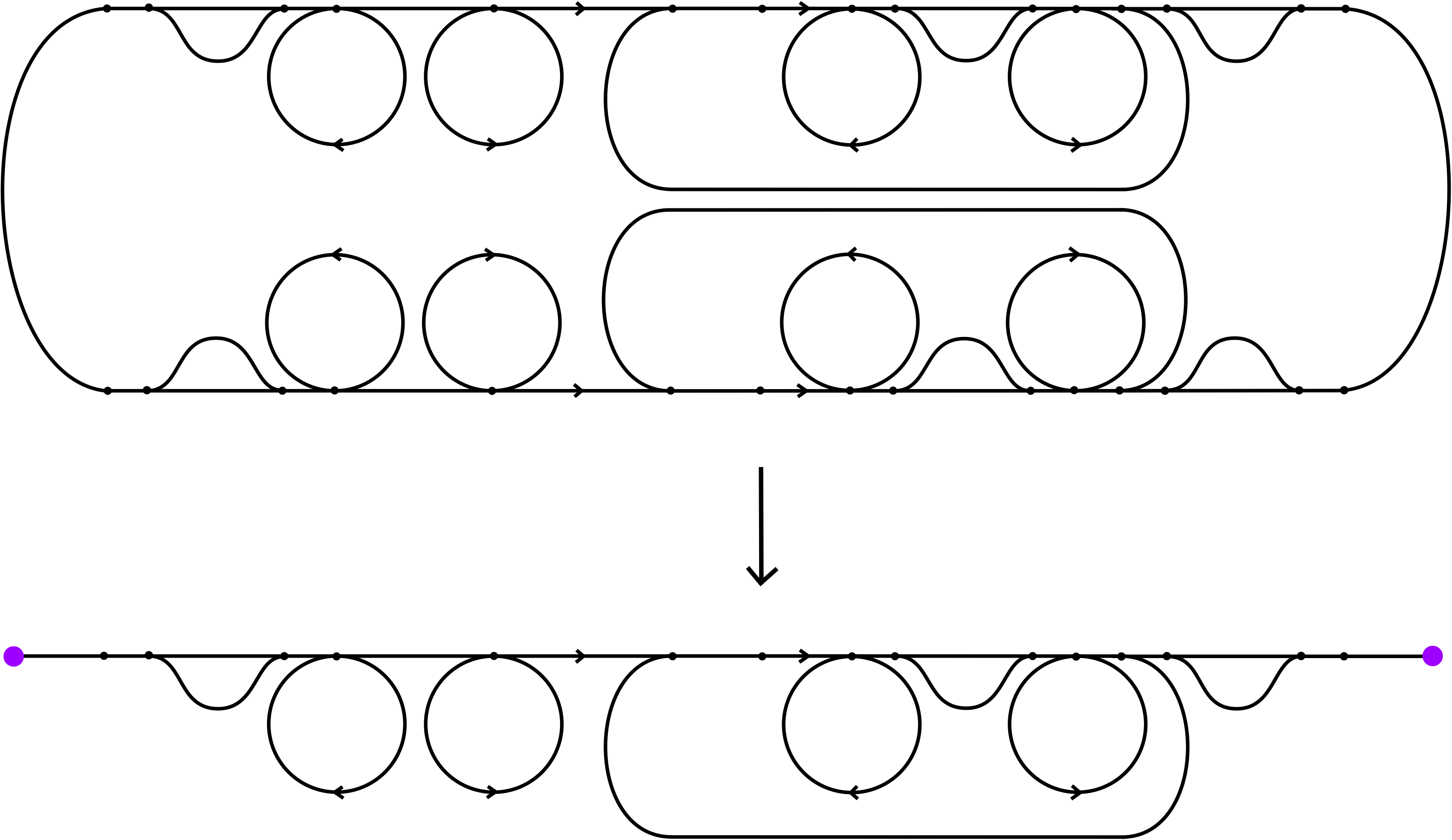}
\put(-10.75,42){\Large$\overline{\Gamma}$}
\put(-10.75,11.5){\Large$\Gamma$}
\put(14.5,58){$S$}
  \put(64,28){$-I$}
    \put(39,28){$L$}
    \put(49,28){$S$}
     \put(54.5,28){$L$}
     \put(54,22){$q$}
     \put(84,28){$S$}
     \put(22,37){$2L$}
     \put(32.5,36.5){$2I$}
     \put(57,37){$2L$}
     \put(72.5,37){$2I$}
     \put(64,58){$-I$}
     \put(14,28){$S$}
    \put(39,58){$L$}
    \put(49,58){$S$}
     \put(54.5,58){$L$}
     \put(84,58){$S$}
     \put(22,49){$2L$}
     \put(32.75,49){$2I$}
     \put(57.5,49){$2L$}
     \put(73,49){$2I$}
     \put(2,43){$P$}
     \put(96.5,43){$P$}
      \put(7,58.5){\color{blue}$x$}
     \put(91,58.5){\color{blue}$w$}
     \put(7,28){\color{blue}$x'$}
     \put(91,28){\color{blue}$w'$}
     \put(64,13.5){$-I$}
    \put(39,13.5){$L$}
    \put(14.5,13.5){$S$}
    \put(49,13.5){$S$}
     \put(54.5,13.5){$L$}
     \put(85,13.5){$S$}
     \put(32.5,4.5){$2I$}
     \put(57.5,4.5){$2L$}
     \put(73,4.5){$2I$}
     \put(22,4.5){$2L$}
     \put(-3,12.5){$y_1$}
     \put(100.5,12.5){$y_2$}
     \put(7,14){\color{blue}$\overline{x}$}
     \put(91,14){\color{blue}$\overline{w}$}
\end{overpic}
\caption{The labeled train track  $\overline{\Gamma}$ and its quotient $\Gamma$.} \label{solbrmfdcover}
\end{figure}

There is an involution
$\sigma$ on $\Wcal_{\overline{\Gamma}}$ with the following properties. Using the identification 
$\Wcal_1\cong \torus\times I$ with the double cover of $Y=K\widetilde{\times}I$, then $\sigma|_{\Wcal_1}=\overline{\alpha}$, the involution of $T \times I$ defined in \Cref{SOLsec}. 
Similarly, define $\sigma|_{\Wcal_3}$. Then $\Wcal_1/\sigma\cong Y\cong\Wcal_3/\sigma$. Define $\sigma|_{\Wcal_2}$ as the identification  $\Wcal_2\cong\Wcal_4$. Then $$\Wcal_{\SOL}=(\Wcal_{\overline{\Gamma}})/\sigma\cong Y_1\cup\Wcal_2\cup Y_2\qquad Y_1\cong K\widetilde{\times} I\cong Y_2.$$
The quotient map is a  $2$-fold covering  $\zeta:\Wcal_{\overline{\Gamma}}\rightarrow\Wcal_{\SOL}$ with deck transformation $\sigma$. 

The fact that $\Wcal_2 \cong \Wcal_4$ implies that for each chart $\phi$ of $\Wcal_{\overline{\Gamma}}$, the map $\phi \circ \sigma$ is also a chart for $\Wcal_{\overline{\Gamma}}$. That is to say, the atlas for $\Wcal_{\overline{\Gamma}}$ is invariant under deck transformations. This implies that the charts for $\Wcal_{\overline{\Gamma}}$ can be pushed down to a set of charts for $\Wcal_{\SOL}$ so that $\Wcal_{\SOL}$ is a branched manifold.

 The involution $\sigma$ of $\Wcal_{\overline{\Gamma}}$ covers 
 an involution of $\overline{\Gamma}$ that swaps  $J_2$ and $J_4$, and the quotient is the labeled train track $\Gamma$ shown in \Cref{solbrmfdcover}. 
 
 There is a map $p:\Wcal_{\SOL}\rightarrow\Gamma$ that is covered by $\overline{p}$ with the property that, except at the endpoints $y_1,y_2$, the map is the usual projection defined for a branched manifold associated to a labeled train track as in \Cref{def:train track mfd}.
The preimage of each endpoint $y_1, y_2$ is a  
Klein bottle and the preimage of a small neighborhood of each endpoint is a copy of $Y$. 
Let $q:\overline{\Gamma}\rightarrow\Gamma$ be the quotient map, then $p\circ\zeta=q\circ\overline{p}$.

As discussed at the start of \Cref{SOLsec}, the canonical bases on $\bdy Y$ lift to bases on $\torus\times -1$ and  
$\torus\times 1$, and these torus slices are mapped to each other by the matrix $P$ with respect to any of these lifted bases. This explains the matrix label $P$ on $J_1$ and $J_3$ in $\overline{\Gamma}$.

\begin{theorem}\label{thm: sol semi-bundles} A closed 3-manifold $M$ properly immerses into $\Wcal_{\SOL}$ if and only if $M$ admits a $\SOL$ structure. Thus, the set of $\SOL$ manifolds is LCD.
\end{theorem}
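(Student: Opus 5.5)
Both directions will go through the double cover $\zeta:\Wcal_{\overline{\Gamma}}\to\Wcal_{\SOL}$ and the projection $p:\Wcal_{\SOL}\to\Gamma$. The plan is to reduce the semi-bundle case to the bundle analysis of \Cref{thm: sol bundles}. For the second sentence of the statement, it suffices by \Cref{equiv thm} to prove the first.

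\textbf{Immersions give $\SOL$ manifolds.} Let $\Omega:M\to\Wcal_{\SOL}$ be an immersion of a closed 3-manifold. Composing with $p$, the argument of \Cref{prop:train track torus bundle} applies away from $y_1,y_2$; near $y_i$ a neighborhood is a copy of $Y$. So $M$ decomposes into fibers over a compact 1-manifold $C$ mapping to $\Gamma$.

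Either $\omega$ avoids $y_1,y_2$, or it reaches some $y_i$. At $y_i$ the path must "fold back", and the corresponding region of $M$ is a finite cover of $Y$ that is a twisted $I$-bundle over a Klein bottle. If $\omega$ avoids $y_1,y_2$, then $M$ is a torus bundle immersed in the part of $\Wcal_{\SOL}$ over $\Gamma\setminus\{y_1,y_2\}$. This part lifts to $\Wcal_{\overline{\Gamma}}$ over a copy of $J_2$, which is a copy of $\EuG$ with extra stubs. Hence the proof of \Cref{thm: sol bundles} applies verbatim, and $M$ is a $\SOL$ bundle. Otherwise, consider the pullback double cover $\widehat M\to M$ of $\zeta$. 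It is a torus bundle immersing into $\Wcal_{\overline{\Gamma}}$. Its immersion loop travels $x\to w$ through $J_2$, crosses $J_3$ (label $P$), returns through $J_4$, and crosses $J_1$ (label $P$). By \Cref{lem: immersions from pathsa} the monodromy is, up to conjugacy, $P B^{-1} P B$ (or similar), where $B=A(\text{path in }J_2)$.

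As in \Cref{thm: sol bundles}, the determinant condition forces the $2L$ and $2I$ loops to be traversed equally often. The path from $x$ to $w$ must also pass the $LSL$ segment near $q$. Hence $B^{-1}$ is $\pm$ a word in $L,S$ containing $LSL$, so $B\in\pm\Pcal$ is strictly positive up to sign. Then $\widehat M$ is virtually equivalent to the torus bundle double cover $M_{B^{-1}PBP}$ of the semi-bundle $N_B$, which is $\SOL$ by \Cref{solsemibundle}. Since $\widehat M$ is virtually equivalent to a $\SOL$ bundle, it is a $\SOL$ bundle, and $M$, being double covered by it, is $\SOL$. One must check that $M$ itself, not just its cover, is $\SOL$. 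This should follow because a closed manifold double covered by a $\SOL$ bundle and containing a Klein bottle is a $\SOL$ semi-bundle.

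\textbf{$\SOL$ manifolds immerse.} A $\SOL$ bundle immerses into $\Wcal_{\EuG}\cong\Wcal_2\subset\Wcal_{\SOL}$, which embeds, by \Cref{thm: sol bundles}. For a semi-bundle, \Cref{solcor}(2) gives $M\cong N_B$ with $B\in\Pcal$. Write $B^{-1}$ (after adjusting by $\pm I$ and inverse) as a word realized by an immersed path $\omega$ from $x$ to $w$ in $J_2$ that traverses $2L$ and $2I$ equally and satisfies the integrality hypothesis of \Cref{thm: immersions from paths}(3b). That hypothesis holds because partial products are integer words in $L,S$ interleaved with balanced $2L,2I$ pairs; the order of traversal should be arranged so each $(2L)^{-1}$ is preceded by a $2I$. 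Then (3b) gives an embedding $\Omega:\torus\times I\to\Wcal_{\SOL}$ with $n=1$, whose ends are marked fibers at $x,w$. Cap each end with the copy of $Y$ over $y_1,y_2$ to obtain an immersion of $N_B$.

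\textbf{Main obstacle.} The hardest step is the first direction's analysis at the endpoints $y_i$. There one must show the immersed manifold is exactly a semi-bundle whose gluing matrix is the holonomy, with the correct canonical bases; this is where $P$ and the diagonal ambiguity of \Cref{solsemibundle} enter. I would handle it by working entirely in the pullback double cover and using equivariance under $\sigma$ and $\overline\alpha$.
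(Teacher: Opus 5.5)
Your proof of ``immersion $\Rightarrow$ $\SOL$'' has a genuine gap. After passing to $\widehat M$, you assume its immersion loop goes around $J_1\cup J_2\cup J_3\cup J_4$ exactly once, with the $J_4$ portion equal to the $\sigma$-image of the $J_2$ portion. That is what gives monodromy $PB^{-1}PB$ for a single $B$. This describes only the torus bundle double cover of a semi-bundle whose path crosses from $y_1$ to $y_2$ once.

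A component of the preimage in $M$ of a neighborhood of $y_i$ need not be a twisted $I$-bundle. It can be $\torus\times I$ double covering $Y$, so the path folds back there; your sentence identifying this region as a twisted $I$-bundle is where this case is lost. For example, $M$ can be a torus bundle whose path runs from $\overline x$ to $\overline w$ along $\gamma_1$, folds at $y_2$, returns along an unrelated $\gamma_2$, and folds at $y_1$. Its lifted loop has monodromy of the form $P\,C^{-1}P\,B$ with $B$ and $C$ independent. Alternatively, a semi-bundle's path can bounce between $y_1$ and $y_2$ several times, so the lifted loop winds around $\overline\Gamma$ several times. For such $M$ nothing makes $\widehat M$ virtually equivalent to some $M_{B^{-1}PBP}$, so appealing to \Cref{solsemibundle} does not finish the argument.

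The missing idea is the paper's closure argument.
\begin{itemize}
\item Cut the lifted loop at its visits to $x$.
\item Each sub-path from $x$ to $w$ in $J_2$ contributes an element of $\pm\Pcal$.
\item Each sub-path from $w$ back to $x$ through $J_3\cup J_4\cup J_1$ contributes $\tau(C)=PC^{-1}P$ with $C\in\pm\Pcal$.
\item Since $PL^{-1}P=L$ and $PSP=-S$, one has $\tau(\pm\Pcal)=\pm\Pcal$.
\item The powers of $2$ cancel globally by the determinant condition.
\end{itemize}
Hence the monodromy is $\pm$ a product of strictly positive matrices. It therefore lies in $\pm\Pcal$, is conjugate into $\pm\Pcal_a$, and is Anosov by \Cref{matrix lemma}.

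This handles every loop through $x$ at once. It also makes your ``main obstacle'' unnecessary: you never need to identify $M$ as a specific $N_B$. It suffices that the double cover is $\SOL$, since a closed manifold finitely covered by a $\SOL$ manifold is $\SOL$.

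Two smaller points. The $2L$/$2I$ balance holds only for the whole loop, not within a single sub-path, so $B^{-1}$ is a word in $L,S$ only up to a power of $2$; this is harmless once scalars are factored out. Your construction of immersions of $\SOL$ semi-bundles matches the paper's, except that \Cref{thm: immersions from paths}(3a) yields an immersion of $\torus\times I$, not an embedding.
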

\begin{proof}  The first step is to show that every $\SOL$ manifold immerses into $\Wcal_{\SOL}$.
 Since $\Gamma$ contains a copy of $\EuG$, 
every $\SOL$ torus bundle immerses into $\Wcal_{\SOL}$.

Suppose that $N_A$ is a $\SOL$ semi-bundle. We may assume that  $A\in\pm\Pcal$.
Now $\Pcal$ consists of all strings in $S$ and $L$ that contain the substring $LSL$. An immersed path in $\Gamma$ that starts at $\overline{x}$ and ends at $\overline{w}$
produces a word in $S$ and $L$ that contains $LSL$ and a numerical factor of  $\pm 2^{\pm m}$ that depends on the number of times the path traverses the $2L, 2I$ loops and the $-I$ arc. All words in $\pm \Pcal$ with no consecutive occurrences of $S$ arise in this way.
 The loops labeled $2L$ and $2I$ in $\Gamma$ appear in pairs. By the orientability of the train track between $\overline{x}$ and $\overline{w}$, traversing the loops labeled $2I$ the same number of times as their corresponding $2L$ loops guarantees that $m=0$. Using the fact that $S^2 =I$ in $\GL(2, \ZZ)$, such paths give all matrices corresponding to words in $\pm\Pcal$, and in particular the matrix $A \in \pm \Pcal$ we began with.

 Thus, there is a path $\omega$ in $\Gamma$ that starts at $\overline{x}$ and ends at $\overline{w}$ so that if $\widetilde{x}$ is a lift of $\overline{x}$ to the universal cover, then $(\hol(\omega); \widetilde{x}, \widetilde{x})$ is $A^{-1}$. Note that $\omega$ can be chosen so that it traverses the $2L, 2I$ loops in such a way that the immersion $\Omega: \torus \times I \to \Wcal_{\SOL}$ produced by \Cref{thm: immersions from paths}(3a) satisfies the hypothesis of \Cref{thm: immersions from paths}(3b), and thus $n=1$.

There is a decomposition $$N_A=Y_1\cup (\torus\times I)\cup Y_2$$
where $Y_1$ and $Y_2$ are copies of $Y$. Fix the canonical basis $(\alpha^2, \beta)$ for each copy. Then, $N_A$ is obtained by identifying the canonical basis of $\bdy Y_1$ with the marked basis of $\torus\times 0$, and the canonical basis of 
$\bdy Y_2$ with the image of the marked basis for $\torus\times 1$ under $A$. 

Let $Y_1'$ be the result of identifying the canonical basis of $\partial Y_1$ and the marked basis of $T \times 0$. The fact that $\Omega$ satisfies \Cref{thm: immersions from paths}(3b) and, in particular, that $n=1$ implies that $\Omega$ extends over $Y_1$ to an immersion of $Y_1'$ into $\Wcal_{\SOL}$. Moreover, since $(\hol(\omega); \widetilde{x}, \widetilde{x}) = A^{-1}$, the map $(\Omega|_{T \times 1})_*$ is represented by the matrix $A$ with respect to the marked bases for the domain and codomain, so $\Omega$ further extends to an immersion of $N_A$ into $\Wcal_{\SOL}$. It follows that every $\SOL$ semi-bundle immerses into $\Wcal_{\SOL}$.

For the converse, first note that there are immersions $M\rightarrow\Wcal_{\SOL}$ whose immersion paths contain interior points mapping to endpoints of $\Gamma$. For example, if one starts with
an immersion of a semi-bundle $\Omega:M\rightarrow\Wcal_{\SOL}$ as described above, and $f:\overline{M}\rightarrow M$ is the torus bundle double cover, then $\Omega\circ f$ has this property. 

Now, suppose $\Omega:M\rightarrow\Wcal_{\SOL}$ is an immersion. Since $\Wcal_{\overline{\Gamma}}$ is a 2-fold cover of $\Wcal_{\SOL}$,
there is a 2-fold cover of $M$ that immerses into $\Wcal_{\overline{\Gamma}}$. Let $\overline{M}$ be a connected component of this cover of $M$ and let $\overline{\Omega}: \overline{M} \to \Wcal_{\overline{\Gamma}}$ be the restriction of the immersion. We will prove that every manifold that immerses into $\Wcal_{\overline{\Gamma}}$ is $\SOL$ so that $\overline M$ is $\SOL$. A  $3$-manifold finitely covered by a $\SOL$ manifold is also a $\SOL$ manifold, which then implies that $M$ is a $\SOL$ manifold as well.

Let $\overline{\Omega}: \overline{M} \to \Wcal_{\overline{\Gamma}}$ be the immersion defined above with immersion loop $\overline{\omega}$. Then $\overline{\omega}$ traverses the $2L$ loops and  the $2I$ loops in $\overline\Gamma$ the same number of times because  $\det A(\overline{\omega})=\pm 1$. For this reason, we ignore the numerical factor $2^{\pm m}$ in what follows.

Recall that there is a decomposition $\overline{\Gamma}=J_1\cup J_2\cup J_3\cup J_4$. The product of matrices (in reverse order) along an immersed path in $J_2$ starting at $x$ and ending at $w$ is given by a word in $\pm\Pcal$.  An immersed path in $J_3\cup J_4\cup J_1$ starting at $w$ and ending at $x$ gives a word of the form $P A^{-1}P$ for some $A \in \pm \Pcal$. 

Recall that $S$ is its own inverse, and observe that $P=P^{-1}$, $PSP=-S$, and $PL^{-1}P=L$. That is to say, conjugation by $P$ sends $L^{-1}$ to $L$ and $S$ to $-S$. It follows that the involution $\tau(A)=PA^{-1}P$ satisfies  $\tau(\pm\Pcal)=\pm\Pcal$ when considering the words in $\Pcal$ as matrices. Thus, an immersed path in $J_3\cup J_4\cup J_1$ starting at $w$ and ending at $x$ gives a word in $\tau(\pm\Pcal)=\pm\Pcal$.

  Now, if the immersion loop $\overline{\omega}$ in $\overline{\Gamma}$ does not meet $x$, then its image is contained in one of the two copies of $\EuG$ in $\overline{\Gamma}$, so $\overline{M}$ is $\SOL$. If $\overline{\omega}$ meets $x$, then it alternates between traversing a path in $J_2$ in the direction from $x$ to $w$ and traversing a path in $J_3\cup J_4\cup J_1$ in the direction from $w$ to $x$. This gives a word  in   the regular language
$$\widetilde{\Pcal} = \pm (\tau(\Pcal)\Pcal)^*\tau(\Pcal)\Pcal.$$
Since the product of strictly positive matrices is a strictly positive matrix and $\tau(\Pcal) \subset \pm \Pcal$, it follows that $\widetilde{\Pcal} \subseteq \pm \Pcal$. Given the additional fact that any word in $\Pcal$ is conjugate into $\Pcal_a$, it follows that $\overline{M}$ is $\SOL$. \end{proof}

\noindent \Cref{thm: sol bundles} and \Cref{thm: sol semi-bundles} give \Cref{solbrmfdthm}.
\gap

As a closing remark, notice that the branched manifold $\Wcal_{\SOL}$ has some special properties that do not hold for some other Thurston geometries. 
It has an affine structure in the sense
that there are affine structures on the sheets with the property that all the transition maps are affine.
This implies that there is a developing map $\widetilde\Wcal_{\SOL}\rightarrow \RR^3$ and  holonomy
homomorphism $$\pi_1(\Wcal_{\SOL})\rightarrow\operatorname{Aff}(\RR^3),$$ defined in an analogous way as for manifolds, where the charts for $\Wcal_{\SOL}$ replace the coordinate  charts.
Also, $\Wcal_{\SOL}$ is obtained from $\bigsqcup \torus\times I$ by gluing boundary components. The foliation
of each $\torus\times I$ by lines $x\times I$ gives a kind of 1-dimensional foliation of $\Wcal_{\SOL}$
by train tracks. The restriction of this foliation to each sheet is a foliation by lines. If $M$ is immersed in  $\Wcal_{\SOL}$,
this foliation pulls back to a $1$-dimensional foliation of $M$. 

In the bundle case, this foliation can be oriented and
then gives the Anosov flow for $M$. Of course, $M$ also has a decomposition into tori that is a fiber bundle
at the generic points, and this pulls back to a foliation of $M$ by tori.
Thus, one might regard $\Wcal_{\SOL}$ as having some kind of $\SOL$ geometry. In addition, it is easy to check that if $\Omega:M\rightarrow \Wcal_{\SOL}$ is an immersion, then it is $\pi_1$-injective.

\bibliography{solrefs.bib} 
\bibliographystyle{abbrv} 
\end{document}